
\pdfoutput=1 
\documentclass[12pt]{article}
\usepackage{amsmath}

\usepackage{amssymb}
\usepackage{amsthm}
\usepackage{alltt}
\usepackage{amscd}
\usepackage[mathscr]{eucal}
\usepackage{mathrsfs}
\usepackage[all]{xy}
\usepackage{verbatim}
\usepackage{moreverb}
\usepackage{graphicx, transparent, color}
\DeclareMathAlphabet{\mathpzc}{OT1}{pzc}{m}{it}
\linespread{1}
\theoremstyle{plain}


\newtheorem{thm}{Theorem}[section]

\newtheorem{cor}{Corollary}
\newtheorem{lemma}[thm]{Lemma}
\newtheorem{prop}[thm]{Proposition}
\theoremstyle{definition}

\begin{document}
{\center\bf\large
Linear Configurations of Complete Graphs $K_4$ and $K_5$ in $\mathbb R^3$, and Higher Dimensional Analogs \\
\small  Andrew Marshall\\ 
\today \vspace{1cm}\\}

 \begin{abstract}

We investigate the space $C(X)$ of images of linearly embedded skeleta of simplices $X$ in $\mathbb R^n$, for two families of codimension 2 complexes, each ranging over $n$.  In the first family, $X=K$ is the $(n-2)$-skeleton of the $n$-simplex.  In the second family, $X=L$ is the $(n-2)$-skeleton of the $(n+1)$-simplex.  The main result is that for $n>2$, $C(X)$ (for either $X=K,L$) deformation retracts to a subspace homeomorphic to the double mapping cylinder \[SO(n)/A_{n+1}\leftarrow SO(n)/A_n\rightarrow SO(n)/S_n,\] where $A_n$ is the alternating group and $S_n$ the symmetric group.  The resulting fundamental group provides an example of a generalization of the braid group, which is the fundamental group of a configuration of points in the plane.  This group is presented, for the case $n=3$, and its action on $F_3$ is presented.

\end{abstract}

\section{Introduction} The braid group on $n$ strands is the fundamental group of the configuration space of $n$ points in the plane.   This group has enjoyed prominence throughout many areas of mathematics and mathematical physics including group theory; the $n$-body problem and symplectic geometry; cryptology; robotic control; knot theory.   When considered as the fundamental group of a space of embeddings mod parametrizations $E(X,Y)/Aut(X)$, (in this case, for $X$ the set of $n$ points, $Y=\mathbb R^2$), a natural question is: what spaces arise from configurations of $X$ in $Y$ for other arguments $X,Y$?  This question has been studied where $Y$ is an assortment of other spaces including surfaces, graphs and lens spaces (see \cite{sbraids4},\cite{sbraids3},\cite{sbraids1}; \cite{ongraphs1},\cite{ongraphs2}; \cite{invariant}), and also where $X$ is more than a discrete space.  In \cite{Dahm} the {\em symmetric automorphism group} is introduced as the fundamental group of the configuration space of $n$ disjoint, unlinked (unknotted) $C^\infty$ embedded circles in $\mathbb R^3$.  In \cite{wickets} the same space was shown to deformation retract to the space of $n$ unlinked circles in $\mathbb R^3$.  In this paper we consider the configuration spaces $C(K_4),C(K_5)$ of linearly embedded complete graphs $K_4,K_5$ in $\mathbb R^3$ and their analogs in higher dimensions: the $(n-2)$-skeleton of the $n$-simplex, denoted by $K$, and the $(n-2)$-skeleton of the $(n+1)$-simplex, denoted by $L$, where each are linearly embedded in $\mathbb R^n$.  The main result is that for $n>2$, the configuration spaces $C(K)$ and $C(L)$ each deformation retract to a subspace homeomorphic to the double mapping cylinder 
\[SO(n)/A_{n+1}\leftarrow SO(n)/A_n\rightarrow SO(n)/S_n,\]
and are therefore homotopy equivalent.  It is noted that this homotopy equivalence does not hold when we increase the number of vertices to get $C(K_6)$, and higher dimensional analogs.  In \cite{Huh} it is shown that a linearly embedded $K_6$ in $\mathbb R^3$ can have either one or three Hopf links, and so in particular $C(K_6)$ is not connected.

In Section 2 we define a lowest dimensional compact subspace called pyramids $P(X)$ of the configuration spaces $C(X)$, $X=C,L$ and show that the respective configuration spaces deformation retract to the pyramidal spaces, and that $P(K)$ is homeomorphic to $P(L)$.  The main result is this stated in terms of the actual homotopy type, (i.e., the double mapping cylinder of the previous paragraph).  The main tool used is an $O(n)$-equivariant Gram Schmidt process, which is conjugated to produce a deformation retraction from the configuration space of simplices to the subspace of regular simplices.  We make use of a combinatorial result of Radon's to limit the types of degeneracies that can occur in $C(K)$ and $C(L)$.  We conclude this section with corollary results about the spaces of embeddings $E(X)=Emb_{Linear}(X,\mathbb R^n)$, $X=L,K$ which cover the corresponding configuration spaces.

Section 3 contains two alternative methods for regularization, presented for their geometric appeal, as well as a recipe for deformation retracting $C(K)$ to $P(K)$ for a generic regularization $\mathscr R$.

Section 4 gives two presentations of the fundamental group of our space of interest and describes the action of this group on $F_n$.

I would like to thank Jason Anema, Lucien Clavier, Charles Marshall, and Jimmy Mathews for helpful discussions on the development of these ideas.  I am especially indebted to my adviser Allen Hatcher, for ideas and guidance in my doctoral program while this material was developed.

%
%
%
%
%
%

\section{Deformation Retractions to Pyramidal Space}
\label{section 1}

\begin{figure} \centering \includegraphics[width=2.5in]{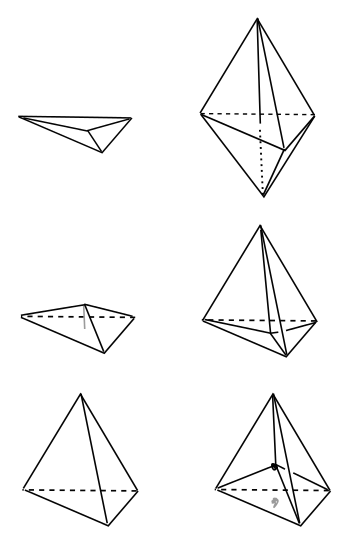} \caption{A comparison of the symmetries found in the pyramidal cases for $n=3$.  Stabilizers in $SO(3)$ for these configurations are, from top to bottom, $S_3$, $A_3$, and $A_4$, which give dihedral, rotational, and tetrahedral symmetries, respectively.}
\label{7.12} \end{figure} 

As defined in the introduction, $K$ is the codimension 2 skeleton of the $n$-simplex, $L$ is the codimension 3 skeleton of the $(n+1)$-simplex, and $C(\cdot)$ with either argument is the configuration space of the argument in $\mathbb R^n$.   

In each of $C(K)$ and $C(L)$ there is a subspace of configurations which enjoy $S_n$ symmetry.  In $C(K)$ these consist of complexes such that $n$ of the points are vertices of a regular $(n-1)$-simplex, while the other vertex lies on the line perpendicular to this simplex and through its barycenter.  In $C(L)$, some $n+1$ vertices are in the same position just described for $C(K)$, but the final vertex lies on the same line, so that two vertices are on a line which is perpendicular to the $(n-1)$-simplex spanning the other $n$ vertices and which passes through the barycenter of this simplex.  Let $P(K)$ be the aforementioned subset of $C(K)$ but where we fix the edges of the regular face to be unit length, fix the barycenter to be at the origin, and truncate the height (i.e., the distance of the vertex in the non-symmetric direction from the barycenter of its opposite face) to be between 0 and that of a regular unit-edged $n$-simplex.  Similarly, we let $P(L)$ be those $S_n$-symmetric configurations in $C(L)$ whose vertices form one regular $n$-simplex $\Delta$, and also form one $P(K)$ configuration $\langle \! |$ sharing a face $F$ of $\Delta$ such that if the apex $a$ of $\langle \! |$ is contained in $\Delta$, then $a$ is between the barycenter of $\Delta$ and the barycenter of $F$.  Here too, we fix the edges of $F$ to be unit length and put the barycenter of the $(n+1)$-simplex at the origin.  In both cases, we call such configurations {\em pyramids}.

\begin{prop}\label{cylinderprop}
Both pyramidal spaces $P(K)$ and $P(L)$ are homeomorphic to the double mapping cylinder $SO(n)/A_{n+1}\leftarrow SO(n)/A_n\rightarrow SO(n)/S_n$, where the maps are dual to inclusions $A_{n+1}\leftarrow A_n\rightarrow S_n.$
\end{prop}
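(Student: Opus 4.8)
The plan is to exhibit each pyramidal space as the total space of a one-parameter family of $SO(n)$-orbits, and to recognize the three orbit types that occur --- one generic and two at the endpoints --- as $SO(n)/A_n$, $SO(n)/A_{n+1}$ and $SO(n)/S_n$. First I would write down an explicit continuous surjection $\Phi_K\colon SO(n)\times[0,1]\to P(K)$ sending $(g,t)$ to $g$ applied to the pyramid whose base is a fixed regular unit $(n-1)$-simplex lying in a linear hyperplane $H\subset\mathbb R^n$ (with barycenter at the origin) and whose apex sits at height $t\,h_{\max}$ on the line $\ell=H^{\perp}$, where $h_{\max}$ is the apex height of the regular unit $n$-simplex. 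Surjectivity and continuity are immediate from the definition of $P(K)$. I would set up the analogous map $\Phi_L\colon SO(n)\times[0,1]\to P(L)$ in which the base $F$ and the vertex $v$ at height $h_{\max}$ are held fixed (so the points $F\cup\{v\}$ always span the regular simplex $\Delta$) while the remaining apex $a$ slides along $\ell$ from the reflection $-v$ of $v$ across $H$ (at $s=0$) to the barycenter of $\Delta$ (at $s=1$); by the distance constraint in the definition of $P(L)$ this traces out exactly the pyramids comprising $P(L)$.

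The heart of the argument is computing the $SO(n)$-stabilizer of each configuration $\Phi_K(\cdot,t)$ and $\Phi_L(\cdot,s)$. For $\Phi_K$ with $t\in(0,1)$: any rotation fixing the configuration must preserve $H$ and, since the apex is off $H$ and not at the origin, must fix $\ell$ pointwise, hence it realizes an even permutation of the base simplex and the stabilizer is $A_n$; at $t=1$ the configuration is the regular $n$-simplex, with orientation-preserving symmetry group $A_{n+1}$; at $t=0$ the apex lies at the origin and the whole configuration lies in $H$, so an odd permutation of the base can be made orientation-preserving by composing with the (now trivial-on-the-configuration) reflection across $H$, giving stabilizer $S_n$. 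For $\Phi_L$ the interior case $s\in(0,1)$ again gives $A_n$, because $v$ and $a$ sit at distinct heights on $\ell$ and so cannot be interchanged; at $s=1$ the point $a$ is the centroid of $\Delta$ and is fixed by all of the rotational symmetry group $A_{n+1}$ of $\Delta$; at $s=0$ the configuration acquires the $\pi$-rotations about the in-$H$ symmetry axes of $F$, which swap $v\leftrightarrow a$, and these together with the rotations about $\ell$ realize a copy of $S_n$ (via $\sigma\mapsto(\sigma\text{ acting on }H)\circ(\operatorname{sgn}\sigma\text{-reflection of }\ell)$). In every case the three subgroups can be chosen so that $A_n$ sits inside $A_{n+1}$ and $S_n$ by the standard inclusions.

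Finally I would assemble the pieces. Since $SO(n)\times[0,1]$ is compact and $C(K)$, $C(L)$ are Hausdorff, $\Phi_K$ and $\Phi_L$ are closed, hence quotient, maps onto $P(K)$ and $P(L)$; moreover each is injective in the parameter (the multiset of vertex distances recovers $t$, resp.\ $s$), so the only identifications are within a single slice. Thus $P(K)\cong(SO(n)\times[0,1])/{\sim}$ where $(g,t)\sim(g',t)$ exactly when $g^{-1}g'$ lies in the stabilizer of the height-$t$ configuration, and similarly for $P(L)$. By the stabilizer computation this quotient collapses each interior slice to $SO(n)/A_n$ and the two boundary slices to $SO(n)/A_{n+1}$ and $SO(n)/S_n$, with the collapses at the two ends given precisely by the projections dual to $A_n\hookrightarrow A_{n+1}$ and $A_n\hookrightarrow S_n$; this is exactly the double mapping cylinder in the statement, so both $P(K)$ and $P(L)$ are homeomorphic to it (hence also to one another). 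I expect the stabilizer computation to be the main obstacle: one must rule out accidental extra symmetries at the special parameter values where, say, the sliding apex happens to be equidistant with a family of other vertices --- so that the generic orbit is genuinely $SO(n)/A_n$ and not something larger forced by the simplicial structure --- and one must arrange the three stabilizers coherently so that the endpoint gluings are literally the standard coverings rather than merely coverings up to a homeomorphism.
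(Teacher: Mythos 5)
Your proposal is correct and follows essentially the same route as the paper's proof, which simply observes that each pyramidal space is a segment's worth of $SO(n)$-orbits with generic stabilizer $A_n$, capped at the two ends by orbits with stabilizers $S_n$ (the planar, resp.\ $v\leftrightarrow a$-symmetric, configurations) and $A_{n+1}$ (the regular, resp.\ centrally-pointed, configurations), glued via the induced $2$-fold and $(n+1)$-fold covers. The paper states this in two sentences as ``more or less evident''; your write-up supplies exactly the parametrization, stabilizer computations, and compact-to-Hausdorff quotient argument that make that sketch rigorous.
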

\begin{proof} This is more or less evident from the description of the pyramidal spaces (see Figure \ref{7.12}).  Both $P(K)$, $P(L)$ decompose into a line segment's worth of configurations which have $A_n$ for a stabilizer in $SO(n)$.  One end of this interval is glued to the configurations with stabilizer $S_n\subset SO(n)$ (in $P(K)$ these are degenerate, being contained in a hyperplane) via the double cover induced from the inclusion $A_n<S_n$.  The other end is glued to the space of configurations with stabilizer $A_n\subset SO(n)$ (in $P(K)$ these are regular simplices) via the $(n+1)$-fold cover induced from an inclusion $A_n<A_{n+1}$.
\end{proof}

The main result then follows if we show the existence of deformation retractions from each configuration space to their associated pyramidal spaces.  The general strategy in the $C(K)$ case will be to use two different deformation retractions: one, a vertex-label-invariant regularization for simplices which are far from degenerate, and two, a vertex specific deformation retraction for those which are near (or in fact) degenerate.  The subtlety here will be in showing the two glue together continuously.  The general strategy for $C(L)$ will be similar, with only minor adjustments. 

We will make use of Radon's theorem twice, which says
\begin{thm}\label{Radons} Any $n+2$ points in $\mathbb R^n$ can be partitioned into two subsets $U_1$, $U_2$ so that ${\rm convex~hull}(U_1)\cap{\rm convex~hull}(U_2)\ne \varnothing.$
\end{thm}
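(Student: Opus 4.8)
The plan is to give the standard affine-dependence argument for Radon's theorem. Label the $n+2$ points $x_1,\dots,x_{n+2}\in\mathbb R^n$ and search for a nontrivial \emph{affine} dependence among them: a tuple $(\lambda_1,\dots,\lambda_{n+2})\ne 0$ satisfying both $\sum_i\lambda_i x_i=0$ and $\sum_i\lambda_i=0$. First I would observe that this is a homogeneous linear system in the unknowns $\lambda_i$: the vector equation $\sum_i\lambda_i x_i=0$ contributes $n$ scalar equations and the normalization $\sum_i\lambda_i=0$ contributes one more, for $n+1$ equations in $n+2$ unknowns. Since there are strictly more unknowns than equations, there is a nontrivial solution; fix one such $\lambda=(\lambda_1,\dots,\lambda_{n+2})\ne 0$.

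Next I would split the index set $\{1,\dots,n+2\}$ by the sign of $\lambda_i$: let $I^+=\{i:\lambda_i>0\}$ and $I^-=\{i:\lambda_i<0\}$. Because $\sum_i\lambda_i=0$ while $\lambda\ne 0$, there is at least one strictly positive and at least one strictly negative coefficient, so both $I^+$ and $I^-$ are nonempty. Setting $S=\sum_{i\in I^+}\lambda_i=-\sum_{i\in I^-}\lambda_i>0$, the numbers $\lambda_i/S$ for $i\in I^+$ are nonnegative and sum to $1$, and likewise $-\lambda_i/S$ for $i\in I^-$, so rearranging the affine dependence gives
\[
\sum_{i\in I^+}\frac{\lambda_i}{S}\,x_i \;=\; \sum_{i\in I^-}\frac{-\lambda_i}{S}\,x_i ,
\]
a point lying in ${\rm convex~hull}(\{x_i:i\in I^+\})\cap{\rm convex~hull}(\{x_i:i\in I^-\})$. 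Taking $U_1$ and $U_2$ to be the point sets indexed by $I^+$ and $I^-$, and throwing any $x_i$ with $\lambda_i=0$ into, say, $U_2$, yields the desired partition.

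I do not expect a real obstacle here; the argument is pure linear algebra together with a count of equations against unknowns. The only steps needing a moment's care are (i) that both sign classes are nonempty — this is exactly where the normalization $\sum_i\lambda_i=0$ is used, not merely $\sum_i\lambda_i x_i=0$ — and (ii) that after dividing by $S$ the two sides are genuine convex combinations. If a more geometric phrasing is preferred one can instead lift each point to $(x_i,1)\in\mathbb R^{n+1}$ and invoke linear dependence of $n+2$ vectors in $\mathbb R^{n+1}$, which is the same computation in disguise.
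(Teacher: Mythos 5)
Your proof is correct and is the standard affine-dependence argument for Radon's theorem; the paper does not prove this statement itself but simply cites Ziegler, where essentially this same argument appears. Both key points you flag (nonemptiness of the two sign classes and the normalization to genuine convex combinations) are handled properly, so there is nothing to add.
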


For a proof see \cite{Ziegler}.  Points in the non-empty intersection are called {\em Radon points}.  The first application is to understand the degeneracies in $C(K)$.  Either $x\in C(K)$ spans a non-degenerate simplex or $x$ is contained in a codimension 1 hyperplane.  A degeneracy of greater codimension is not possible, since any $n$ vertices in $x$ span a non-degenerate $(n-1)$-simplex, as they belong to the simplicial sphere that is the $(n-2)$-skeleton of the $(n-1)$-simplex.  By Radon's theorem, if $x$ is contained in a codimension 1 hyperplane, it cannot be that all $n+1$ vertices are extremal, since in this case the least numerous of $U_i$ must contain at least 2 vertices, and hence an edge must intersect its opposite face, both of which belong to $x$.  Therefore, for degeneracies in $C(K)$, exactly 1 vertex is in the convex hull of the others.

A {\em solid angle} of a solid cone in $\mathbb R^n$ (for our purposes, a cone will be the convex hull of $n$ rays from a common {\em cone point}) is defined to be the $(n-1)$ dimensional volume of the intersection of the cone with a unit sphere centered at the cone point.  A solid angle of a vertex $v$ of a simplex is the solid angle of the cone formed by the edges incident to $v$.  

\begin{lemma} \label{solidangles} The sum of the solid angles of an $n$-simplex $x$ in $\mathbb R^n$ is bounded from above by half the volume of the unit $(n-1)$-sphere, and below by 0, for $n>1$.  These are tight bounds.
\end{lemma}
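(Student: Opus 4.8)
I will translate everything into a statement about the tangent cones at the vertices and a bookkeeping device built from barycentric coordinates. Write $x=\mathrm{conv}(v_0,\dots,v_n)$ and let $\lambda_0,\dots,\lambda_n\colon\mathbb R^n\to\mathbb R$ be its affine barycentric coordinate functions, so $\sum_k\lambda_k\equiv 1$ and $x=\{y:\lambda_k(y)\ge 0\ \forall k\}$. For a unit vector $u$ put $D_k(u):=\nabla\lambda_k\cdot u$, the (constant) directional derivative. Then $\sum_k D_k(u)=0$, the map $u\mapsto(D_0(u),\dots,D_n(u))$ is a linear isomorphism of $\mathbb R^n$ onto the hyperplane $\{\sum_k t_k=0\}\subset\mathbb R^{n+1}$, and a one-line computation (differentiate $y=\sum_k\lambda_k(y)v_k$ along $u$) identifies the tangent cone at $v_i$, translated to the origin, as the simplicial cone $C_i=\{u:D_k(u)\ge 0\ \forall k\ne i\}$, whose interior is $\{u:D_k(u)>0\ \forall k\ne i\}$ — a condition which also forces $D_i(u)<0$. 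The solid angle at $v_i$ is $\Omega_i=\mathrm{vol}(C_i\cap S^{n-1})$, and since $\partial C_i\cap S^{n-1}$ is null this equals $\mathrm{vol}(\mathrm{int}\,C_i\cap S^{n-1})$.

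Two observations then give the upper bound. First, the $C_i$ have pairwise disjoint interiors: a common point $u\in\mathrm{int}\,C_i\cap\mathrm{int}\,C_j$ with $i\ne j$ would give $D_i(u)<0$ from the first cone and $D_i(u)>0$ from the second. Second — and this is where the hypothesis $n>1$ enters, since it guarantees at least three vertices — the set $A:=\bigcup_i(\mathrm{int}\,C_i\cap S^{n-1})$ is disjoint from its antipode $-A$: if $u\in\mathrm{int}\,C_i$ and $-u\in\mathrm{int}\,C_j$, choose an index $k\notin\{i,j\}$; then $D_k(u)>0$ (because $k\ne i$) while $D_k(u)=-D_k(-u)<0$ (because $k\ne j$), a contradiction. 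Since $\sum_i\Omega_i=\mathrm{vol}(A)$ by disjointness and the antipodal map preserves volume, $2\sum_i\Omega_i\le\mathrm{vol}(S^{n-1})$, which is the claimed bound. For $n\ge 3$ the inequality is strict: the complement $S^{n-1}\setminus(A\cup -A)$ contains every direction whose velocity vector $(t_k)$ has at least two strictly positive and at least two strictly negative coordinates, and for $n+1\ge 4$ such vectors form a nonempty open subset of $\{\sum_k t_k=0\}$, hence a set of positive volume.

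The lower bound $\sum_i\Omega_i>0$ is immediate, since each $C_i$ is the conical hull of the $n$ linearly independent vectors $v_k-v_i$ and is therefore $n$-dimensional, of positive solid angle. For tightness I would exhibit two degenerations. Sliding $v_0$ toward the relative interior of the opposite facet, with the other vertices fixed, opens $C_0$ up to a halfspace, so $\Omega_0\to\tfrac12\mathrm{vol}(S^{n-1})$ while each remaining $C_k$ collapses into the fixed hyperplane and $\Omega_k\to 0$; this shows the upper bound is approached. For the lower bound (when $n\ge 3$) let $v_0,\dots,v_n$ converge to $n+1$ points in convex position inside a hyperplane $H$ — for instance to $n+1$ distinct points of a round sphere $S^{n-2}\subset H$, all of which are extreme points — so that every limiting tangent cone is a pointed cone lying in the linear hyperplane $H_0$ parallel to $H$; a semicontinuity estimate then forces every $\Omega_i\to 0$. (When $n=2$ the sum is identically $\pi=\tfrac12\mathrm{vol}(S^1)$, so there the upper bound is in fact attained.)

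The one step that needs genuine care, and the part I expect to be the main obstacle, is that last semicontinuity claim: merely knowing the limiting tangent cones lie in a hyperplane does not by itself force the solid angles to zero, because the conical hull operation is discontinuous at configurations whose hull contains a line. This is exactly why the degenerate points must be taken in convex position, making each limiting cone pointed: a pointed cone is strictly separated from the origin by some hyperplane, that separation persists under small perturbations, and — together with the fact that the generators $v_k-v_i$ have inner product with the normal of $H$ tending to $0$ — it confines the nearby tangent cones to a shrinking band about $S^{n-2}=H_0\cap S^{n-1}$, whose volume goes to $0$. Everything else reduces to the elementary barycentric-derivative bookkeeping set up at the start.
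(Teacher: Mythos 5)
Your proof is correct and takes essentially the same route as the paper's: both express the vertex cones in a common linear coordinate system (your barycentric derivatives $D_k$ are exactly the paper's coefficients $a_i$ with $t_0=-\sum_j a_j$ adjoined), deduce that the $2(n+1)$ cones $\pm C_i$ have pairwise disjoint interiors, and conclude $2\sum_i\Omega_i\le \mathrm{vol}(S^{n-1})$, with tightness witnessed by the same two degenerating families (one vertex approaching the interior of its opposite facet, and a collapse onto a hyperplane with all vertices extreme). Your careful handling of the lower-bound degeneration via pointedness and the shrinking band, and the strictness remark for $n\ge 3$, go beyond the paper's one-line assertion but do not change the approach.
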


\begin{proof}

For each of the $n+1$ vertices in $x$, translate a copy of $x$ so that its $i$th vertex is at $0$.  Let $C_i$ denote the interior of the $i$th cone formed by extending each incident edge outwards, and denote with $-C_i$ its reflection through $0$.  In $C_0$ we have positive coordinates $(a_1,\ldots,a_n)$ representing coefficients of the $n$ vertices of $x$ excluding the origin.  Then \[C_0={\rm positive~span }\{x_i\}=\sum a_i x_i,\] for 
$a_i\ge 0$, and 
\begin{align*}C_i&={\rm positive~span}(\{x_j-x_i\}_{j\ne i}\cup\{-x_i\})\\
&=(\sum_j a_j(x_j-x_i))-a_ix_i.\end{align*}
Putting these into coordinates in $x_i$ gives $C_i$ as 
\[(a_1,a_2,\ldots,a_{i-1},-\sum_j a_j,a_{i+1},\ldots, a_n),\]
from which it is clear that for any $i\ne j$ we have 
\[int(C_i)\cap int(C_j)=int(C_i)\cap int(-C_j)=\varnothing.\]

Intersection with a unit sphere $S^{n-1}$ then gives that 
\[Vol(S^{n-1})\ge \sum_i~Vol(C_i\cap S^{n-1})+\sum_i~Vol(-C_i\cap S^{n-1})=2S,\]
for $S$ the sum of the solid angles. (This argument only fails in the case $n=1$. For $n=2$ the inequality is an equality.)  The bounds are tight since a near degenerate simplex can be made to have one solid angle which approaches a hemisphere (here one vertex is close to being in the convex hull of the others) or made so that each solid angle is arbitrarily close to 0 (here one edge is close to intersecting its opposite $(n-2)$-face).  We extend the greatest solid angle to be $\frac 1 2 Vol(S^{n-1})$ on those degenerate configurations with one vertex in the convex hull of the others.
\end{proof}

The effect of this lemma is that we have a surjective function \[\alpha:C(K)\rightarrow (0,V],\] where $V=\frac 1 2 Vol(S^{n-1})$, which gives the greatest solid angle, and for $x\in (\frac 1 2 V, V]$ there is only the one vertex with a solid angle in this range.

We will provide a way to regularize simplices far away from $\alpha^{-1}(V)$ by first giving an orthogonalization which is $O(n)$-equivariant (in particular, equivariant to vertex labeling). 

\begin{lemma} The linear deformation retraction 
\[\Phi_t(x)=(1-t)x+tx(x^\intercal x)^{-1/2}\] is equivariant under the right action of $O(n)$ and terminates in $O(n)$. 
\end{lemma}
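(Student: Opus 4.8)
The plan is to recognize $\Phi_1$ as the orthogonal factor in the polar decomposition of $x$ and to extract both claims from the uniqueness of the symmetric positive–definite (SPD) square root. Throughout I take $x$ to range over the invertible $n\times n$ matrices (the nondegenerate simplices), so that $x^\intercal x$ is SPD and $(x^\intercal x)^{-1/2}$ denotes its unique SPD inverse square root; this assignment depends continuously on $x\in GL_n(\mathbb R)$, so $\Phi$ is a continuous straight–line homotopy from $\Phi_0=\mathrm{id}$ to $\Phi_1(x)=x(x^\intercal x)^{-1/2}$. First I would check that $\Phi$ is well defined, i.e. stays in $GL_n(\mathbb R)$: writing $\Phi_t(x)=x\big((1-t)I+t(x^\intercal x)^{-1/2}\big)$ and diagonalizing $(x^\intercal x)^{-1/2}$ by an orthogonal change of basis, the bracketed factor has eigenvalues $(1-t)+t\lambda$ with each $\lambda>0$, hence is invertible for every $t\in[0,1]$, and $x$ is invertible.

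Next, to show $\Phi_1$ terminates in $O(n)$ I would just compute $\Phi_1(x)^\intercal\Phi_1(x)=(x^\intercal x)^{-1/2}(x^\intercal x)(x^\intercal x)^{-1/2}=I$, using that $(x^\intercal x)^{-1/2}$ commutes with $x^\intercal x$. Since $\det(x^\intercal x)^{-1/2}>0$, the sign of $\det\Phi_1(x)$ equals that of $\det x$, and on $O(n)$ itself one has $x^\intercal x=I$, so $\Phi_t$ is the identity there; thus $\Phi$ is in fact a strong deformation retraction of $GL_n(\mathbb R)$ onto $O(n)$, respecting the two components. The only thing worth double-checking in this step is whether the intended domain is all of $GL_n(\mathbb R)$ or only $GL_n^+(\mathbb R)$, since that is what determines whether $\Phi_1$ lands in $O(n)$ or in $SO(n)$.

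For the right $O(n)$-equivariance, fix $g\in O(n)$, so that $g^\intercal=g^{-1}$ and $(xg)^\intercal(xg)=g^{-1}(x^\intercal x)g$. The one observation needing a sentence of justification is that the SPD square root commutes with conjugation by $g$: the matrix $g^{-1}(x^\intercal x)^{-1/2}g$ is SPD (symmetry is immediate from $g^\intercal=g^{-1}$, and its spectrum equals that of $(x^\intercal x)^{-1/2}$) and squares to $g^{-1}(x^\intercal x)^{-1}g=\big(g^{-1}(x^\intercal x)g\big)^{-1}$, so by uniqueness of the SPD square root $\big((xg)^\intercal(xg)\big)^{-1/2}=g^{-1}(x^\intercal x)^{-1/2}g$. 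Substituting, the factor $gg^{-1}$ cancels and I get $\Phi_t(xg)=(1-t)xg+t\,(xg)\,g^{-1}(x^\intercal x)^{-1/2}g=\big((1-t)x+t\,x(x^\intercal x)^{-1/2}\big)g=\Phi_t(x)g$. I do not expect a genuine obstacle: once the standard properties of the SPD square root are in hand the whole argument is formal matrix algebra, and the only delicate point is this commutation with orthogonal conjugation, which is exactly what uniqueness of the positive–definite square root delivers.
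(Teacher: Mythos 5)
Your proposal is correct and follows essentially the same route as the paper: identify $\Phi_1(x)=x(x^\intercal x)^{-1/2}$ as orthogonal by direct computation, and obtain equivariance from $\bigl((xQ)^\intercal(xQ)\bigr)^{-1/2}=Q^\intercal(x^\intercal x)^{-1/2}Q$. The only difference is that you explicitly justify this last identity via uniqueness of the symmetric positive-definite square root (and also check that the straight-line homotopy stays in $GL_n(\mathbb R)$), steps the paper's proof uses without comment.
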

\begin{proof} First, the inverse of the square root is defined for $x^\intercal x$, as this is a positive definite matrix, and so is diagonalizable with a diagonal of positive eigenvalues.  Next, $x(x^\intercal x)^{-1/2}\in O(n)$ since 
\begin{align*}
x(x^\intercal x)^{-1/2}[x(x^\intercal x)^{-1/2}]^\intercal & = x(x^\intercal x)^{-1} x^\intercal \\&= I.
\end{align*}
Finally, for $Q\in O(n),$ 
\begin{align*}
\Phi_t(xQ)& = (1-t)xQ+txQ[(xQ)^\intercal xQ]^{-1/2}\\& = (1-t)xQ+tx(x^\intercal x)^{-1/2}Q\\& = \Phi_t(x)Q.\end{align*}\end{proof}
The matrix $x^\intercal x$ is referred to as the Gram matrix of the columns of $x$, and the orthogonalization is known by the name L\"owdin orthogonalization (see \cite{Low}).

It is noted for the interested reader that this linear deformation retraction realizes the shortest path from $GL(n)$ to $O(n)$ in the Frobenius norm (i.e., the Euclidean norm) and in fact the cut locus for $O(n)$ in $\mathbb R^{n\times n}$ is exactly $\det^{-1}(0)$, the singular matrices.

\begin{thm}\label{regularization}
The space of unlabeled simplices in $\mathbb R^n$ deformation retracts to the space of regular simplices. 
\end{thm}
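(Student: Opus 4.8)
The plan is to reduce everything to the Löwdin retraction $\Phi_t$ of the preceding lemma. The one subtlety is the choice of coordinates on the space of simplices: we coordinatize against a fixed \emph{regular} simplex rather than a ``corner'' simplex, which is what turns the relabelling action into right multiplication by an orthogonal matrix. So fix once and for all a centered regular $n$-simplex $R$ with vertices $r_0,\dots,r_n$, so that $\sum_i r_i = 0$, the lengths $|r_i|$ are equal, and $r_1,\dots,r_n$ form a basis of $\mathbb R^n$. Given a nondegenerate simplex with vertices $v_0,\dots,v_n$, let $b=\tfrac1{n+1}\sum_i v_i$ be its barycenter and let $A\in GL(n)$ be the unique matrix with $Ar_i = v_i - b$ for $i=1,\dots,n$; since $r_0 = -\sum_{i\ge 1} r_i$ and $\sum_i(v_i-b)=0$, this automatically gives $Ar_0 = v_0-b$ as well. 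The assignment $(v_i)\mapsto(b,A)$ is then a homeomorphism from the space of nondegenerate simplices onto $\mathbb R^n\times GL(n)$, and a direct check (the Gram matrix of the vertices of a regular simplex is forced by its edge length) shows that the regular simplices congruent to $R$ correspond under it precisely to $\mathbb R^n\times O(n)$. If one prefers the space of \emph{all} regular simplices, one replaces $O(n)$ by $\mathbb R^{>0}\cdot O(n)$ throughout and uses the evident scale-preserving variant $x\mapsto x(x^\intercal x)^{-1/2}|\det x|^{1/n}$ of Löwdin; nothing else changes.

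Next I would check that relabelling is a product action. A permutation $\sigma\in S_{n+1}$ of the vertices leaves $b$ fixed, and the new shape matrix $A'$ satisfies $A'r_i = v_{\sigma(i)} - b = A r_{\sigma(i)} = A\rho(\sigma) r_i$, where $\rho(\sigma)\in O(n)$ is the isometry of $R$ carrying $r_i$ to $r_{\sigma(i)}$; hence $A' = A\rho(\sigma)$. Thus $S_{n+1}$ acts on $\mathbb R^n\times GL(n)$ trivially on the first factor and by right multiplication by the subgroup $\rho(S_{n+1})\le O(n)$ on the second. By the preceding lemma, $\Phi_t$ is equivariant under the right $O(n)$-action, hence in particular under the right $\rho(S_{n+1})$-action; it is moreover a strong deformation retraction of $GL(n)$ onto $O(n)$, since it stays inside $GL(n)$ (because $\Phi_t(x)=x\bigl[(1-t)I+t(x^\intercal x)^{-1/2}\bigr]$ and the bracketed matrix is positive definite) and is the identity on $O(n)$ (there $x^\intercal x=I$). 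Combining $\Phi_t$ on the $GL(n)$ factor with the identity on the $\mathbb R^n$ factor therefore gives an $S_{n+1}$-equivariant strong deformation retraction of $\mathbb R^n\times GL(n)$ onto $\mathbb R^n\times O(n)$, which descends to the quotients to give a strong deformation retraction of the space of unlabelled simplices onto the space of unlabelled regular simplices.

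The one place where something must be chosen correctly is the coordinatization in the first step. Had we instead recorded a simplex by the $n\times n$ matrix of edge vectors emanating from a distinguished vertex $v_0$, a relabelling would act by an affine change of reference frame lying outside $O(n)$, and the $O(n)$-equivariance of Löwdin would be of no help; using the regular simplex $R$ as the common reference is exactly what makes the symmetric-group action orthogonal. I expect this to be the only real point, the remaining verifications — the two properties of $\Phi_t$ quoted above and the continuity of $(v_i)\mapsto(b,A)$ and of its inverse — being immediate from the definitions.
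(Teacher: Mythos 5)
Your proposal is correct and is essentially the paper's own argument: both conjugate the $O(n)$-equivariant L\"owdin retraction $\Phi_t$ by a fixed regular reference simplex so that vertex relabelling becomes right multiplication by orthogonal matrices, and then descend to the unlabelled quotient. The only differences are cosmetic --- you use a barycentric normalization $\mathbb R^n\times GL(n)$ where the paper uses the vertex-at-origin convention and the relation $AB=QA$ for the relabelling matrices $B_i$ --- plus your welcome explicit check that the straight-line path stays in $GL(n)$, which the paper leaves implicit.
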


\begin{proof} Let $A$ be the $n\times n$ symmetric matrix whose columns form a unit edge length simplex.  Explicitly $A$ has $\mu$ for each entry on its diagonal and $\nu$ for each entry off the diagonal where $\mu^2+(n-1)\nu^2=1$ and $2(\mu-\nu)^2=1$, so that
\[\mu=\frac{n+\sqrt{n+1}-1}{\sqrt 2 n}\hspace{1.5cm}\text{and}\hspace{1.5cm}\nu=\frac{\sqrt{n+1}-1}{\sqrt 2 n}.\]

Then up to scaling and translation, the space of regular labeled simplices in $\mathbb R^n$ is the orbit $O(n)\cdot A$.

Let $B_i$ be the $n\times n$ identity matrix with the $i$th row replaced by $[-1,\ldots,-1]$.  The matrix $B_i$ acts on the right as a column operator to change bases between vertices of an $n$-simplex.  I.e., given a matrix $x$ whose columns $x_i$ form a basis, the columns of $xB_i$ are those emanating from $x_i$ to $0$ and to each of the other $x_j$'s (see figure \ref{figu7o5}).  The matrices $B_i$ generate a representation of $S_{n+1}\in O(n)$ with $B_i$ mapped to by the transposition $(0,i)$ (see figure \ref{figu7o7}).  Let $B\in\{B_i\}$.  Then we have the relationship
\[AB=QA, \hspace{1cm} {\rm so }\hspace{1cm}BA^{-1}=A^{-1}Q\]
for some $Q\in O(n)$ (this is obvious, geometrically).  
\begin{figure}[h] \centering \def\svgwidth{150pt} 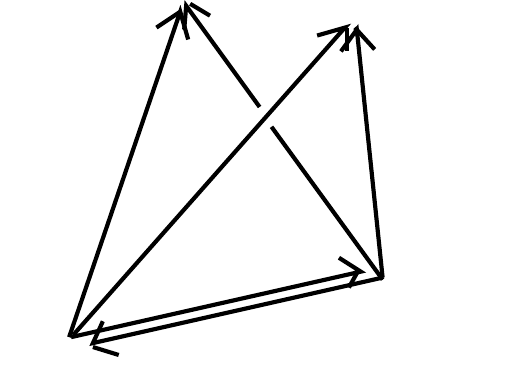 \caption{$B_i$ swaps $x$ for the basis at $x_i$ which spans the same simplex as $x$.}
\label{figu7o5}
\end{figure}

\begin{figure}[h!]
\centering
\includegraphics[scale=.5]{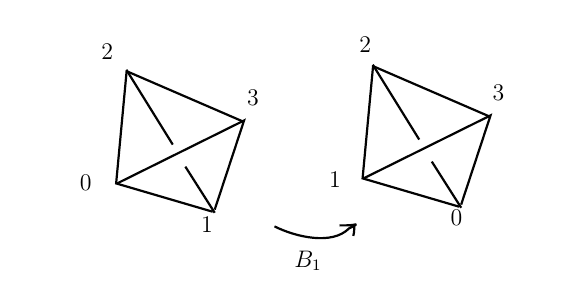}
\caption{$B_i$ is effectively the transposition $(0,i)$.}
\label{figu7o7}
\end{figure}

\begin{figure}[h] \centering \def\svgwidth{320pt} 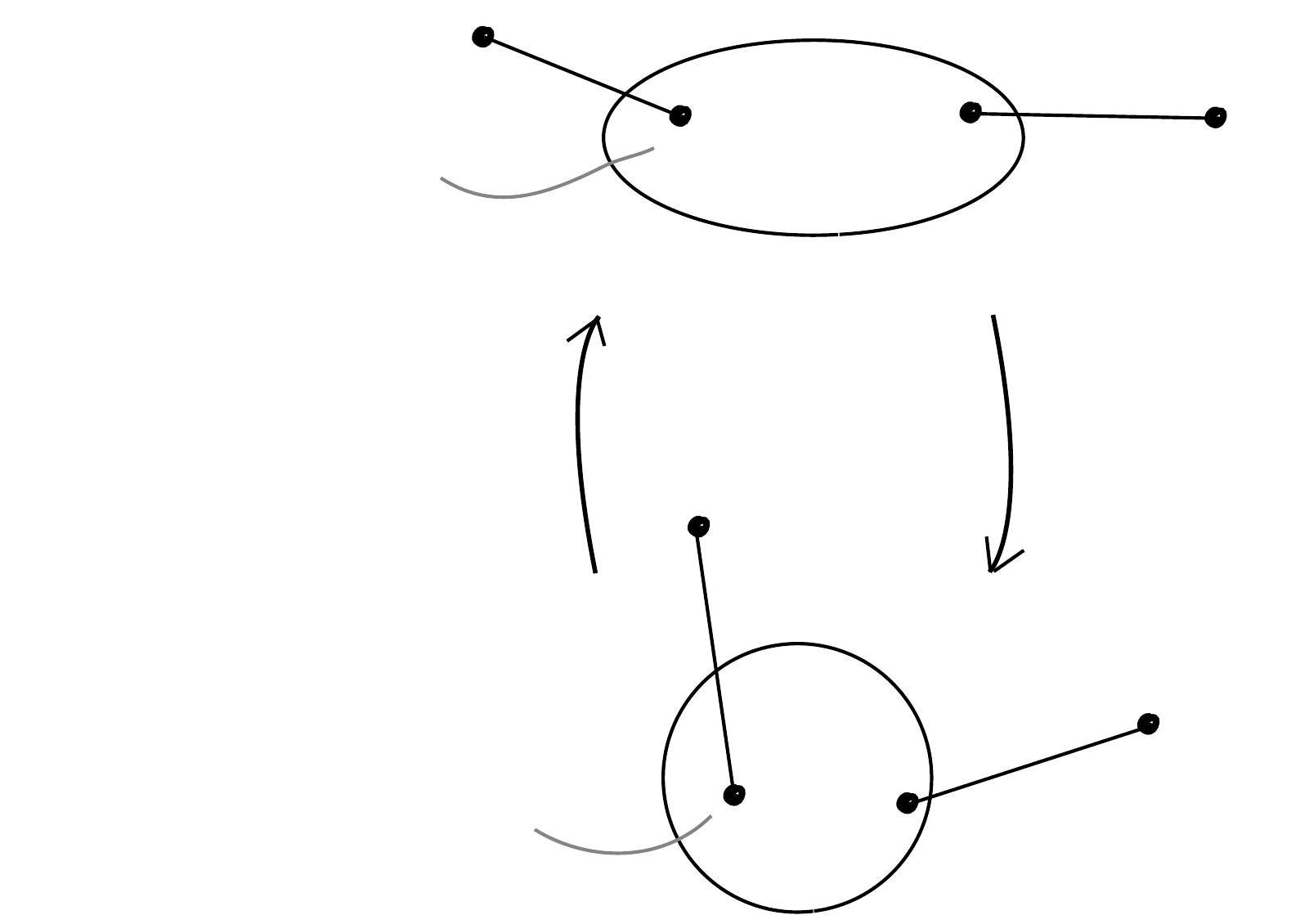 \caption{The $O(n)$-equivariant orthogonalization is conjugated to give an $S_{n+1}$-equivariant regularization.  The line segments are the deformation retractions in $GL(n)$.}
\label{figu7o6}
\end{figure}

Let \begin{align*}\Omega_t(x) &=\Phi_t(xA^{-1})A \\&=\Big[(1-t)xA^{-1}+txA^{-1}((xA^{-1})^\intercal(xA^{-1}))^{-1/2}\Big]A.\end{align*}
Then $\Omega_t(x)\cdot A^{-1}$ gives a linear path from $x$ to $O(n)\cdot A$ (see figure \ref{figu7o6}).  Equivariance of $\Omega$ in $B$ follows from equivariance of $\Phi$ in $O(n)$: \begin{align*}\Omega_t(xB) &=\Phi_t(xBA^{-1})A \\ &= \Phi_t(xA^{-1}Q)A \\ &= \Phi_t(xA^{-1})QA \\ &= \Phi_t(xA^{-1})AB\\&=\Omega_t(x)B.\end{align*}
It is therefore the case that $\Omega_t$ descends to the quotient $(GL(n)\cdot A)/S_{n+1}$, to give a linear (i.e., vertices move along linear paths) $S_{n+1}$-equivariant regularization of simplices in $\mathbb R^n$.
\end{proof}

\begin{thm} \label{P(K)} The space $C(K)$ of unlabeled codimension 2 skeleta of the $n$-simplex linearly embedded in $\mathbb R^n$ deformation retracts to the pyramidal space $P(K)$.
\end{thm}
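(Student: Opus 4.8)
The plan is to build an explicit deformation retraction of $C(K)$ onto $P(K)$ by splicing together two homotopies --- a relabeling-invariant regularization where the configuration is far from degenerate, and a vertex-specific straightening near a degeneracy --- exactly as foreshadowed in the outline above. As a harmless first step I would retract $C(K)$ onto the slice of configurations whose $n+1$ vertices have barycenter $0$ (a straight-line homotopy in the translation, $SO(n)$-equivariant and fixing $P(K)$) and work there; the overall scale will be normalized automatically by the flows below. Let $\alpha\colon C(K)\to(0,V]$, $V=\tfrac12\mathrm{Vol}(S^{n-1})$, be the maximal-solid-angle function of Lemma \ref{solidangles}, and set $A=\alpha^{-1}([V/2,V])$, $B=\alpha^{-1}((0,V/2])$, so $A\cap B=\alpha^{-1}(V/2)$. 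On $A$ the maximal solid angle is attained at a unique vertex $v=v(x)$ (as observed after Lemma \ref{solidangles}), and by the Radon argument of Theorem \ref{Radons} every degenerate configuration lies in $A$ with $v$ its unique non-extremal vertex; thus $v$, and the opposite face $F=F(x)$ spanned by the remaining $n$ vertices, vary continuously on $\mathrm{int}\,A$.

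On $A$ I would push $x$ to a pyramid with apex $v$. For a unit-base pyramid of height $h$ let $a(h)$ be its apical solid angle; one checks $a$ is a strictly decreasing bijection $[0,h_{\mathrm{reg}}]\to[s,V]$ ($h_{\mathrm{reg}}$ the height of a unit-edge regular $n$-simplex, $s=s(n)$ its vertex solid angle), so $f:=a^{-1}$ is defined on $[V/2,V]$ with $f(V)=0$ and $f(V/2)=:h^{\ast}\in(0,h_{\mathrm{reg}})$. The homotopy straightens $F$ onto a fixed unit regular $(n-1)$-simplex while simultaneously sliding $v$ along a straight line onto the perpendicular through the barycenter of $F$, ending at height $f(\alpha(x))$; the straightening of $F$ is the L\"owdin flow $\Phi_t$ conjugated as in the proof of Theorem \ref{regularization}, only applied to the $(n-1)$-simplex $F$ (with the regular $(n-1)$-simplex in place of the matrix $A$ there) rather than to $x$. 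What makes this legitimate on all of $A$, degenerate configurations included, is that any $n$-subset of the vertices of $x$ always spans a nondegenerate $(n-1)$-simplex --- it lies on the simplicial sphere that is the $(n-2)$-skeleton of an $(n-1)$-simplex --- so the Gram matrix of $F$ is invertible even when $x$ itself is flat. Since $f\circ a=\mathrm{id}$ and $\Phi_t$ fixes the relevant orthogonal orbit pointwise, this homotopy restricts to the identity on $P(K)\cap A$, and one checks it stays inside $C(K)$ ($F$ remains nondegenerate and $v$ does not cross its hyperplane).

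On the fat region $B$ I would start from the relabeling-invariant regularization $\Omega_t$ of Theorem \ref{regularization}, which flows every configuration linearly to the regular simplex --- the $h=h_{\mathrm{reg}}$ end of $P(K)$ --- and, since $\Phi_t$ preserves $GL(n)$, never leaves $C(K)$. But $\Omega_t$ alone neither fixes the pyramids of height $h\in(h^{\ast},h_{\mathrm{reg}})$ that also lie in $B$ nor matches the region-$A$ homotopy along $\alpha^{-1}(V/2)$, so it must be interpolated: off the locus $T\subset B$ where the maximal solid angle is attained at more than one vertex there is again a preferred apex and one flows toward the pyramid of height $f(\alpha(x))$ --- extending $f$ by $f\equiv h_{\mathrm{reg}}$ on $(0,s]$, so that the very fat configurations target the regular simplex, where the choice of apex is immaterial --- whereas near $T$ and near the regular-simplex locus one reverts to $\Omega_t$. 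Arranging this interpolation so that the pieces assemble into a single continuous $H\colon C(K)\times I\to C(K)$ --- across $\alpha^{-1}(V/2)$ (where the apex is still unique, so this match is the easy direction), and, more delicately, across $T$, where the ``which vertex is the apex'' data is genuinely discontinuous and where a symmetric configuration with $\alpha(x)>s$ must be reconciled with the nearby asymmetric configurations that would flow to distinct medium pyramids --- is the technical crux and the step I expect to be the main obstacle; this is precisely the ``showing the two glue together continuously'' flagged above, and it is here that the geometry of $a(h)$ and of the solid-angle function along $T$ has to be used to make the collapses compatible.

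Finally I would check the formal requirements: $H_0=\mathrm{id}_{C(K)}$, $H_1(C(K))\subseteq P(K)$ (the images of both pieces are pyramids of height in $[0,h_{\mathrm{reg}}]$), and $H_t$ fixes $P(K)$ pointwise for all $t$; hence $C(K)$ deformation retracts onto $P(K)$, and with Proposition \ref{cylinderprop} this yields the homotopy type $SO(n)/A_{n+1}\leftarrow SO(n)/A_n\rightarrow SO(n)/S_n$.
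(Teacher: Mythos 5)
There is a genuine gap, and it is the one you yourself flag: the homotopy on the fat region $B=\alpha^{-1}((0,V/2])$ is never actually constructed. Your design forces a preferred-apex flow (target = pyramid of height $f(\alpha(x))$ with apex at the maximal-solid-angle vertex) to persist into $B$, where the maximum can be attained at several vertices. Along the tie locus $T$ the apex datum is genuinely discontinuous, and the targets on either side of $T$ are pyramids of height as low as roughly $h^{\ast}$ with \emph{different} apices, while on $T$ you propose to revert to $\Omega_t$, whose target is the regular simplex of height $h_{\mathrm{reg}}$. Making these compatible requires an explicit interpolation parameter (e.g.\ a continuous measure of the gap between the largest and second-largest solid angles, vanishing on $T$) together with a verification that the interpolated endpoints vary continuously and stay in $C(K)$; none of this is supplied, and it is precisely the hard part of the theorem. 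Note also an internal inconsistency: you cannot both ``revert to $\Omega_t$ near the regular-simplex locus'' and have $H_t$ fix $P(K)$ pointwise, since $\Omega_t$ moves every non-regular pyramid toward the regular simplex.

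The paper's proof avoids this difficulty entirely, and it is worth seeing how. Its apex-dependent flow $\Psi_t=\bar\Omega_t\circ\omega_{\eta(x)t}^{-1}\circ\Omega_t$ lives only on $\alpha^{-1}([V/2,V])$, where the apex is automatically unique, and the damping parameter $\eta(x)=2\alpha(x)/V-1$ is engineered to vanish at $\alpha=V/2$, so that $\Psi_t$ degenerates \emph{exactly} to the label-invariant $\Omega_t$ on the interface; the fat region then uses pure $\Omega_t$ and never needs an apex at all, so the locus $T$ plays no role. The price is that the target height is $(1-\eta)$ times the regular height rather than your $f(\alpha(x))$, so intermediate pyramids drift during the homotopy and one obtains only a weak deformation retraction (fixed afterwards by reparametrization) --- which suffices for the homotopy-type conclusion. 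Your choice of $f=a^{-1}$ is what makes $P(K)\cap A$ pointwise fixed, but it is also exactly what pushes the apex-dependence across $\alpha^{-1}(V/2)$ into $B$ and creates the unresolved gluing problem. Either adopt the paper's damping so that the two regions agree identically on $\alpha^{-1}(V/2)$ and $B$ carries no apex data, or supply the full interpolation across $T$; as written, the proof is incomplete at its central step.
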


\begin{proof} Over $\alpha^{-1}(0,\frac 1 2 V]$ we use $\Omega_t$ to regularize simplices.  For the rest, we use $\alpha$ as a parameter to alter $\Omega$ in two ways.  First, as $\alpha$ nears $V$ we wish to leave the {\em wide face}, (i.e., the face opposite the large solid angle vertex) ever more fixed.  Second, we wish to use $\alpha$ as a parameter to scale the terminal simplex so that it is not the height of a regular simplex but rather as $\alpha$ approaches $V$, the height of the terminal simplex approaches 0. Let $\eta(x)=2\alpha(x)/V-1$ be a reparametrization of $\alpha$ on $\alpha^{-1}(\frac 1 2 V,V)$ (so $\eta$ ranges from 0 to 1 as $\alpha$ ranges from $\frac 1 2 V$ to $V$).  

For any path $\Gamma(t)$ originating in $\alpha^{-1}(\frac 1 2 V,V)$ there is a unique orientation-preserving affine linear transformation $\gamma_t(x)\in {\rm Aff}_+(\mathbb R^n)$ which agrees with $\Gamma$ on the wide face, and which scales isometrically in the perpendicular direction (note that $\gamma_0={\rm id}$).  We say $\gamma$ is the {\em map induced by} $\Gamma$.  Let $\omega_t$ be thus induced by $\Omega_t$.  We consider $\omega_{\eta(x) t}^{-1}\circ\Omega_t(x)$.

Note, for ease of understanding where this argument is going, that $\omega_{t}^{-1}\circ\Omega_t(x)$ is a path from $x$, which keeps the wide face $W_x$ fixed and ends in being height $\sqrt{\frac {n+1}{2n}}$ (i.e., the height of a regular $n$-simplex with unit edges) above $W_x$, directly over its barycenter, while $\omega_{0}^{-1}\circ\Omega_t(x)=\Omega_t(x)$. Also, for a path $\Gamma(t)$ originating in $\alpha^{-1}(\frac 1 2 V,V)$, there is a map $\bar\Gamma_t(x)\in {\rm Aff}_+(\mathbb R^n)$ which fixes the plane containing the image of $W_x$ under $\Gamma(t)$ and scales by $(1-\eta(x))t$ in the perpendicular direction.  Then
\begin{align}\Psi_t(x)=\bar\Omega_t\circ\omega_{\eta(x) t}^{-1}\circ\Omega_t(x)\end{align}
is the linear deformation retraction that results in simplices that differ from being pyramids by an affine linear map which regularizes the wide face and is extended to an isometry in the perpendicular direction. (Technically we should also translate so the barycenter is at the origin, although this is immaterial.)  For this final step, we use the deformation retraction of theorem \ref{regularization} in dimension $n-1$ to regularize the wide face, as it is a non-degenerate $(n-1)$-simplex.

This process has a unique continuous extension to $\alpha^{-1}(V)$, which is that the non-extremal vertex moves along a straight line to the barycenter of its wide face, then the wide face is regularized.

To recap: we regularize those simplices with a small greatest solid angle.  For those with a large enough greatest solid angle to designate a vertex, and hence its opposite face, we use this solid angle as a parameter to damp the effect of the regularization on the wide face, and simultaneously to scale the resulting simplex to be pyramid like (except that the wide face is not yet regular).  We follow this with a regularization of the wide face, which is an isometry in the perpendicular direction.  It is noted that, as it is, the space $P(K)$ flows in the direction toward the regular subspace, so the deformation retraction is {\em weak} in the sense that the target space moves.  It should be clear that a reparametrization can fix this, if such is called for.
\end{proof}

To understand the degeneracies of $C(L)$, again we apply Radon's theorem (Theorem \ref{Radons}).  In this case it tells us either some vertex of $x\in C(L)$ is in the interior of the convex hull of the other vertices, or that there is an edge which intersects its opposite would-be $(n-1)$-face (this face is not part of $x$).  Intersections of faces, each of dimension greater than 1, is forbidden, as both faces will belong to $x$.  Then there are essentially two types of generic configurations in $C(L)$, connected by those $x$ with one vertex in a would-be $(n-1)$-face, which are present in $P(L)$ (see Figure \ref{figu7o3} and the right hand column of Figure \ref{7.12}).

\begin{figure}[h] \centering \def\svgwidth{240pt} 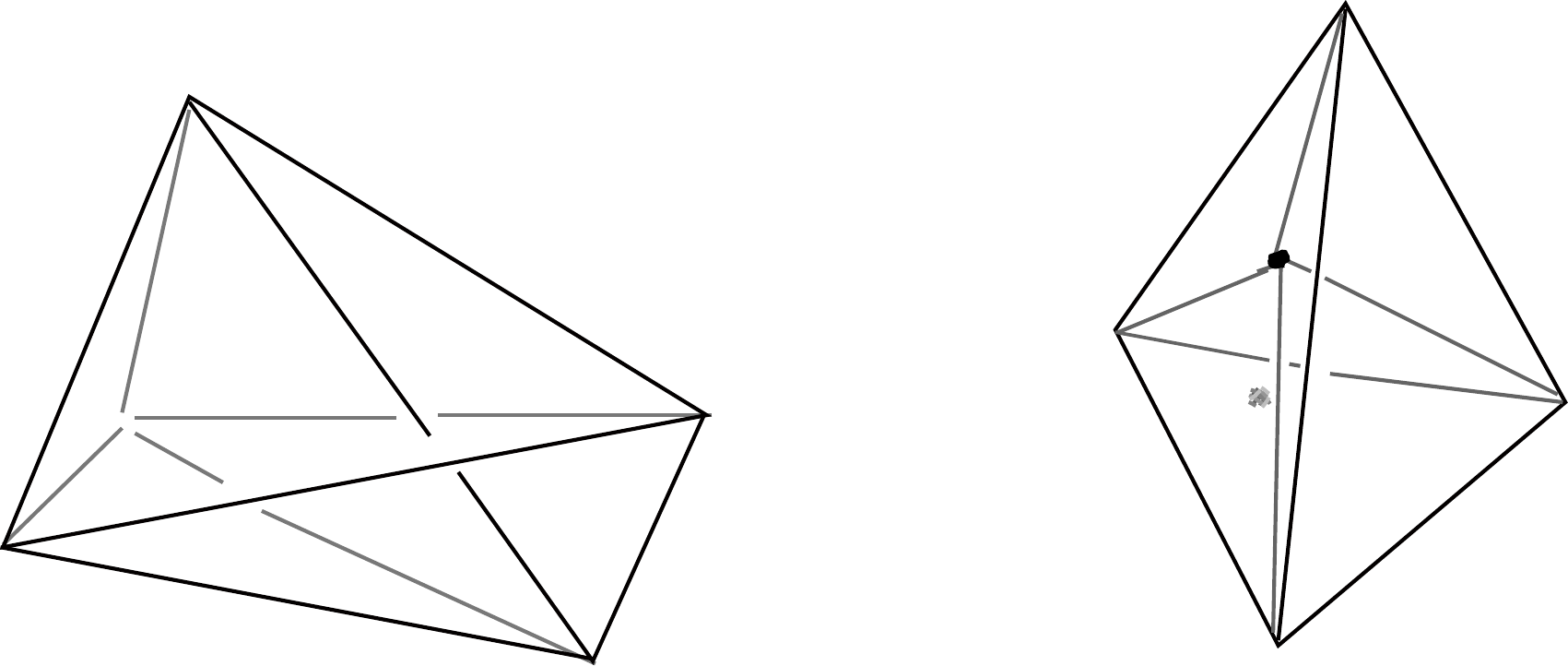 \caption{Generic configurations of $C(L)$: either some vertex is interior to the convex hull of the others or some specific edge intersects its opposite face.}
\label{figu7o3}
\end{figure}

\begin{thm} \label{P(L)} The space $C(L)$ of unlabeled codimension 3 skeleta of the $(n+1)$-simplex, linearly embedded in $\mathbb R^n$ deformation retracts to the pyramidal space $P(L)$. \end{thm}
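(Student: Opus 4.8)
The plan is to run the argument of Theorem~\ref{P(K)} with the $n$-simplex $\Delta$ spanned by $n+1$ of the $n+2$ vertices playing the role that the whole simplex played there, and with the remaining vertex carried along as a rider; the new work is (i) to extract $\Delta$ and the rider canonically and (ii) to glue the two Radon strata. First I would record, using Radon's theorem (Theorem~\ref{Radons}) together with the codimension of $L$, exactly which degeneracies occur: for $x$ spanning $\mathbb{R}^n$ with no $n+1$ vertices coplanar the Radon partition is unique, and a partition of type $(k,n+2-k)$ with $3\le k\le n-1$ would force a face of $L$ of dimension $k-1\ge 2$ to meet a disjoint face of dimension $n+1-k\ge 2$, which is impossible in an embedding; hence every such $x$ is of type $(1,n+1)$ (one vertex $a$ interior to the $n$-simplex $\Delta$ on the other $n+1$ vertices) or of type $(2,n)$ (an edge $pq$ crossing the would-be facet $G$ spanned by the other $n$ vertices). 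The two resulting open strata $C_A$, $C_B$ are glued along the locus $T$ of configurations with one vertex in a would-be $(n-1)$-face, and $T$, together with both ``ends'' of the one-parameter family, lies in $P(L)$ (Proposition~\ref{cylinderprop}).

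Next I would fix the canonical data. On $C_A$ the rider $a$ and the simplex $\Delta$ are the interior vertex and its complement; the relevant facet $F$ of $\Delta$ is the one toward which $a$ is displaced, i.e.\ the facet opposite the vertex of $\Delta$ at which the barycentric coordinate of $a$ is smallest. On $C_B$ I would order $p,q$ by distance to $\mathrm{aff}(G)$, take the nearer one (say $p$) as the rider $a$, set $\Delta=G\cup\{q\}$ and $F=G$; a short check shows these designations agree with the $C_A$ ones across $T$ (a vertex exiting $\Delta$ through a facet is, just afterwards, the endpoint of the crossing edge nearest the crossed facet, and the crossed facet is the one it was displaced toward). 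The only places these choices fail to be continuous are the codimension-one locus $C_B^{=}\subset C_B$ where $p$ and $q$ are equidistant from $\mathrm{aff}(G)$ --- which models the $S_n$-symmetric end of $P(L)$ --- and the loci where two barycentric coordinates of $a$ tie or $a$ approaches a vertex of $\Delta$.

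The retraction itself then proceeds as in Theorem~\ref{P(K)}. On $C_A\cup(C_B\setminus C_B^{=})$ I apply the $O(n)$-equivariant regularization $\Omega_t$ of Theorem~\ref{regularization} to $\Delta$, carrying $a$ along by the induced affine map, and damp this regularization so as to keep the $F$-axis (the line through $\mathrm{bary}(F)$ and the opposite vertex) and the foot of $a$ on it aligned --- exactly the role that the wide face and its height play in $\Psi_t$ --- after which $\Delta$ is a regular $n$-simplex and $a$ lies on its $F$-axis; then I slide $a$ along that axis into the admissible interval $s\in[-(n+2)/n,0]$, landing at a point of $P(L)$. In a neighborhood of $C_B^{=}$ I instead regularize the $(n-1)$-simplex $G$ by Theorem~\ref{regularization} in dimension $n-1$, carry $p$ and $q$ along affinely keeping them symmetric about $\mathrm{aff}(G)$, project them onto the axis of $G$, and rescale isometrically along that axis to land in the $S_n$-symmetric stratum; I interpolate between the two constructions using a normalized $|\mathrm{dist}(p)-\mathrm{dist}(q)|$ as a damping parameter, in precisely the manner of $\eta$, and likewise damp near the facet-tie and near-vertex loci so that $a$ is drawn toward $\mathrm{bary}(\Delta)$ (which lies on every facet-axis) wherever no single facet is singled out.

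The main obstacle --- just as flagged for $C(K)$, only more so, since there are now two regularizations to reconcile --- is proving that these pieces assemble into a single continuous deformation retraction. The delicate overlaps are $T$, $C_B^{=}$, and the configurations with $a$ near a low-dimensional face of $\Delta$; on each one must verify that the limiting affine maps, and the apex/facet designations, match, using the $O(n)$- and $S_{n+1}$-equivariance of the orthogonalization (which absorbs the residual labeling ambiguity of $\Delta$) and the $\eta$-style damping (which forces agreement of the two models in the limit). Finally one checks, as for $P(K)$, that the construction restricts to the identity on $P(L)$ up to the harmless reparametrization and translation of the barycenter to the origin.
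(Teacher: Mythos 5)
Your proposal takes essentially the same route as the paper's proof: the identical Radon stratification of $C(L)$ into the interior--vertex stratum (the paper's $\mathcal I$) and the crossing--edge stratum ($\mathcal E$), glued along the locus $\mathcal B$ of configurations with a vertex in a would-be facet; a canonical apex/facet designation on $\mathcal I$ (your smallest-barycentric-coordinate rule is exactly the paper's decomposition of the hull of the $n+1$ extremal vertices into cones from the centroid over its facets, with ties sent to the centroid); the damped regularization $\Psi_t$ of Theorem \ref{P(K)}; and a final application of Theorem \ref{regularization} in dimension $n-1$ to the shared face. Two comments. First, the continuity verifications you defer at the gluing loci are not a routine check but the actual content of the paper's Step 1: it exhibits an explicit interpolation $s(m,q)$ (with $s(0,q)=1$, $s(m,0)=0$, $s(\tfrac13(1-q),q)=q$, extended by $s\equiv 0$ at $q=0$) that slides the interior vertex onto the segment from the centroid to the facet barycenter, and on $\mathcal E$ it parallel-transports the crossing edge so its Radon point lands on the facet barycenter, followed by a shear calibrated so that the far endpoint moves less and less as one approaches $\mathcal B$ and the near endpoint's path degenerates to the straight line used on $\mathcal B$; a complete write-up of your plan would have to produce the analogues of these formulas, since this is where the theorem is actually proved. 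Second, the one genuine organizational divergence is on $\mathcal E$: rather than designating the endpoint nearer to $\mathrm{aff}(G)$ as a ``rider'' and patching in a separate symmetric model near your equidistant locus $C_B^{=}$ (which, as you correctly note, maps to the bipyramidal end of $P(L)$), the paper first moves the whole edge by the parallel-transport-plus-shear (which is manifestly symmetric when the endpoints are equidistant) and only afterwards applies the damped regularization to the two half-spaces cut by the shared facet; this removes one of the three interpolations your version would need to justify.
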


\begin{proof} We achieve  the deformation retraction in three steps, the first two of which are divided into 3 cases each.  By $\mathcal I\subset C(L)$ we name those configurations with a vertex interior to the convex hull of the others.  By $\mathcal E$ we name those with an edge intersecting the would-be $(n-1)$-face in the interior of that edge.  By $\mathcal B$ we denote their mutual boundary (see the middle right figure in Figure \ref{7.12}).

\begin{figure}[h!]
\centering
\includegraphics[scale=.5]{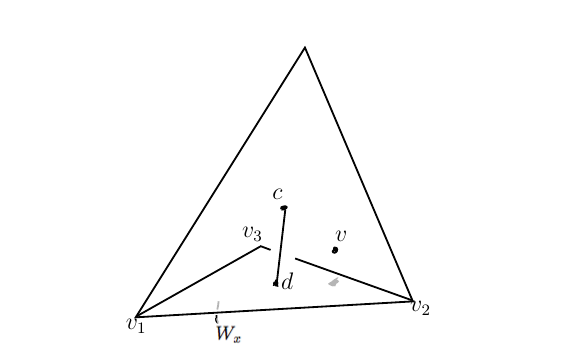}
\caption{Realize $v$ as a convex combination of $c$ and the closest vertices $v_i$ to $v$.}
\label{figu7o8}
\end{figure}

Step 1a. Let $x\in\mathcal I$ with interior vertex $v$.  Let $\{v_i\}$, $1\le i\le n$, be $n$ of the closest vertices of $x$ to $v$, let $c$ be the centroid of $x$, and $d$ be the centroid of the face $W_x$ spanned by $\{v_i\}$ (see figure \ref{figu7o8}).  We will move $v$ to lie along the line segment connecting $c$ to $d$.  This can be done explicitly by putting $v$ in barycentric coordinates 
\[v=qc+\sum a_iv_i.\hspace{1.5cm}(\text{with }q+\sum a_i=1\text{, and }q,a_i\ge 0)\]
Set $m=\min\{a_i\}$.  Note that $m=q=0$ cannot happen since this would put $v$ in the $(n-2)$-skeleton of $x$ (see figure \ref{figu7o9what}).  We have $3m\le 1-q$ and require a parameter $s(m,q)$ so as to send $v$ to $(1-s)d+sc$ which is continuous on $0\le 3m\le 1-q\le 1$ minus the origin $q=m=0$, and for which $s(0,q)=1$, $s(m,0)=0$ and $s(\frac 1 3 (1-q),q)=q$ (so that if $v$ is equidistant to two extremal vertices it gets sent to $c$, if it is in $W_x$ it gets sent to $d$, and if it is on the line connecting $c$ to $d$ it is fixed).  This is accomplished with
\[s(m,q)=(1-q)\Big(1-\frac{3m}{1-q}\Big)^{1/q}+q,\]
which we extend continuously by $s\equiv 0$ on $q=0$ (see figure \ref{figu7o10what}).  Sending $v$ to $(1-s)d+sc$ along the straight line path $v_t=(1-t)v+t(1-s)d+sc$ gives a retraction of $\mathcal I$ to the subspace of $\mathcal I$ with internal vertex along a radial segment connecting the barycenter to the center of a face. (Note that the would-be faces which include $c$ but exclude two of extremal vertices are the boundaries defining which radial segment $v$ ends up on.  Any vertex in this would-be face ends up at $c$.)

\begin{figure}[h!]
\centering
\includegraphics[scale=.5]{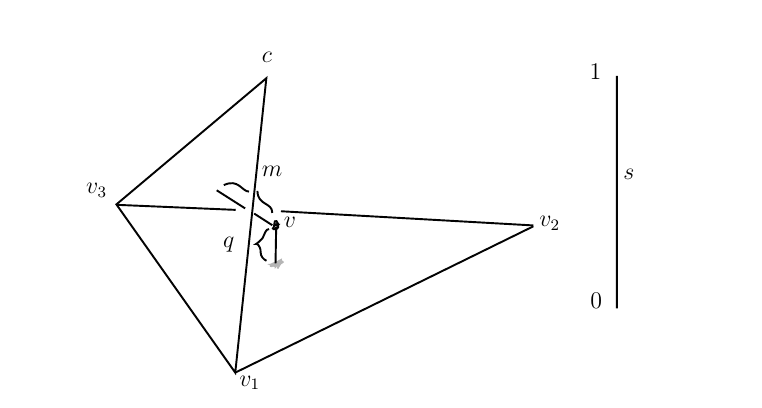}
\caption{Using the parameters $q$, which is distance from the extremal face, and $m$, minimum distance to a face containing $c$, to define $s$.}
\label{figu7o9what}
\end{figure}

\begin{figure}[h!]
\centering
\includegraphics[scale=.46]{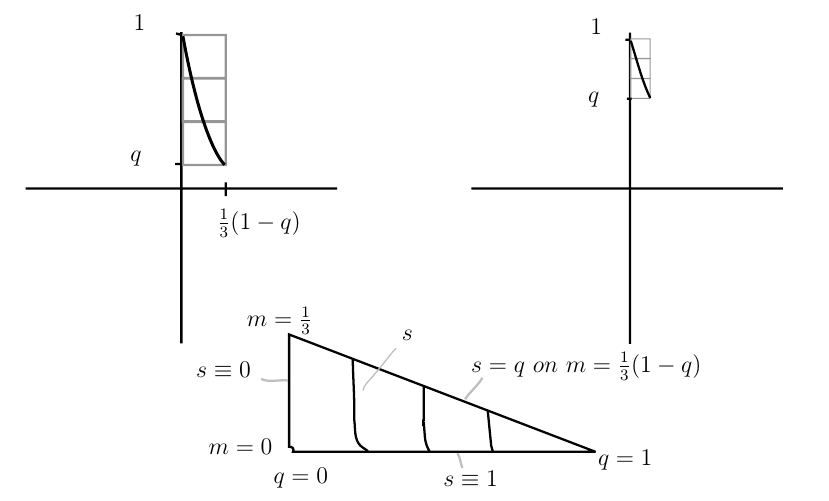}
\caption{Graphs of $s$ for smaller $q$ (left), for larger $q$ (right), and in the $q$-$m$ plane (bottom).  Note the origin is excluded. $~~~~~~~~~~~~~~~~~~~$$~~~~~~~~~~~~~~~~~~~~~~~~~~~~~$$~~~~~~~~~~~~~~~~~~~$$~~~~~~~~~~~~~~~~~~~~~~~~~~~~~$}
\label{figu7o10what}
\end{figure}

\begin{figure}[h!] \centering \def\svgwidth{350pt} 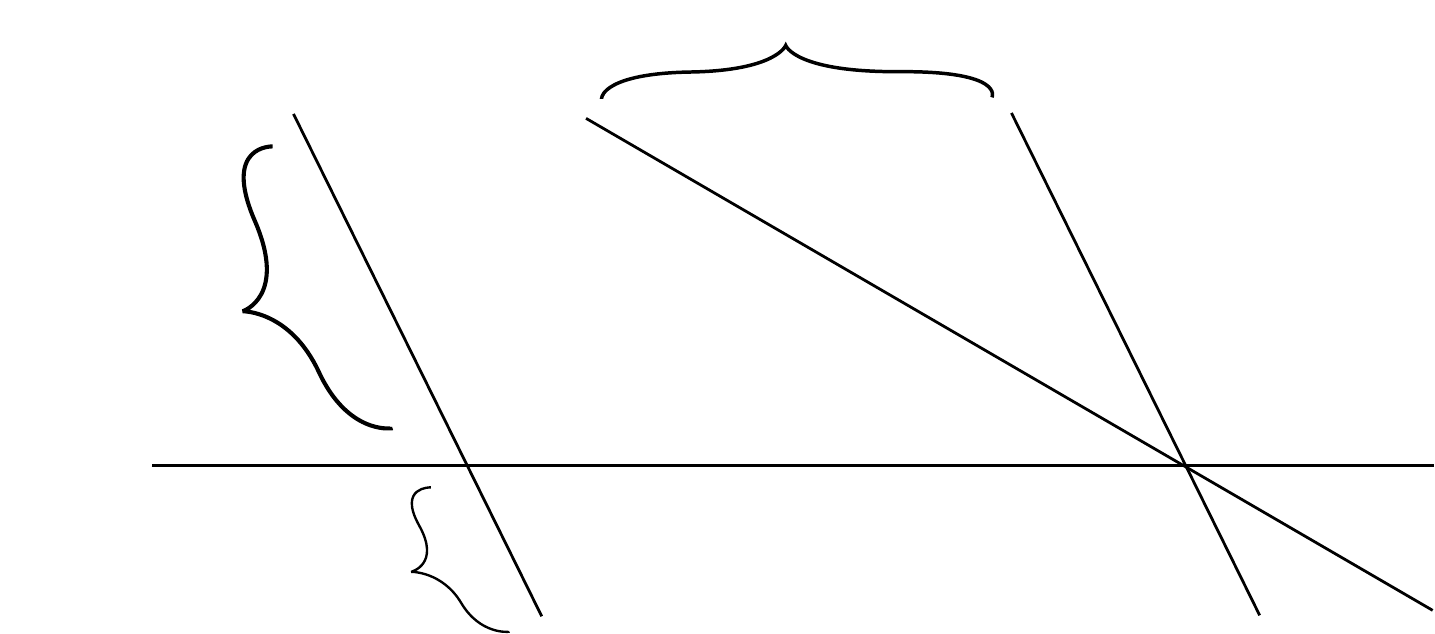 \caption{Parallel transport followed by a shear, with $v_2$ going back in the direction parallel transported.}
\label{fig7o11}
\end{figure}

Step 1b.  Let $x\in\mathcal E$. We want to parallel transport the edge $e$ containing the Radon point $p$ so that the intersection of this edge with its opposite face $W_x$ is at the barycenter $d$ of that face.  When one vertex $v_1$ of $e$ is close to $W_x$ we need the other vertex $v_2$ to move only a small distance so that step 1b can be continuously glued to step 1a.  To do this, we follow the parallel transport with a sheer back in the direction that $v_2$ has moved, in the plane containing $d$ and $e$, with origin at $d$, in proportion to $1-\frac{|v_1-p|}{|v_2-p|}$.  Explicitly, put $\ell_i=|v_i-p|$ where $\ell_2\ge \ell_1$  (see figure \ref{fig7o11}), and send $v_2$ along a linear path to $\bar v_2=v_2+\frac{\ell_1}{\ell_2}(d-p)$, and send $v_1$ along a linear path to $\bar v_1=v_1+(1+\frac{\ell_1}{\ell_2}(1-\frac {\ell_1}{\ell_2}))(d-p)$.  Then the weighted average $\frac {\ell_2} {\ell_1+\ell_2} v_1 + \frac {\ell_1} {\ell_1+\ell_2} v_2 = p$, whereas $\frac {\ell_2} {\ell_1+\ell_2} \bar v_1 + \frac {\ell_1} {\ell_1+\ell_2} \bar v_2 = d$.  As we approach $\mathcal B$, $\ell_1/\ell_2$ approaches $0$ and $v_2$ moves less and less.  Note also that as we approach $\mathcal B$, $v_1$'s path approaches the linear path to $d$.

Step 1c. These two deformation retractions agree on their respective extensions to $\mathcal B$.  In both cases the extension is to send the Radon point vertex to the centroid of the $(n-1)$-face it is in, in a linear path while fixing everything else.

At the end of step 1 the Radon point of each $x\in C(L)$ is along a ray extending from the centroid of $x$ to the centroid of a face.  For $0\le t \le \frac 1 3$, let $\Lambda_t$ be all three parts of step 1, simultaneously performed in the variable $3t$.

Step 2a. For $x\in\Lambda_{1/3}(\mathcal I)$ (we now have $q=s$), we use the parameter $1-q$ for the role of $\eta$ in equation (1) (i.e., in the definition of $\Psi$) to damp the regularization of the extremal $n$-simplex against $W_x$.  We do not scale in the perpendicular direction as was done via $\bar \Omega$.  If we write 
\begin{align}\hat\Psi=\bar\Omega_t\circ\omega_{(1-q(x)) t}^{-1}\circ\Omega_t(x)
\end{align}
where $\bar\Omega_t$ scales by $qt$ in the direction perpendicular to $\Omega_t(W_x)$, then step 2a. can be succinctly written as $\bar\Omega_t^{-1}\hat\Psi_t$.  After this step, those configurations which had a vertex already at their barycenter (so that $1-q=0$) have been brought into $P(L)\cap\mathcal I$ (specifically, those pyramidal configurations with $A_{n+1}$ stabilizers in $SO(n)$, i.e., they are regular with a vertex at their barycenter), while those with $q\in(0,1)$ arrive at a configuration which only fails to be in $P(L)$ by exactly the map which regularizes $W_x$ and extends preserving distance and orientation in the perpendicular direction.

Step 2b.  For $x\in\Lambda_{1/3}(\mathcal E)$ let $v_1,v_2$ be as in 1b.  We will keep the shared face fixed, by operating as if $q=0$ (here $q$ no longer represents a barycentric coordinate, but merely the parameter that replaces $1-\eta$ in equation (1)) and applying $\bar\Omega_t^{-1}\hat\Psi_t$ to the half-space $H_2$ containing $v_2$, while to the other half-space $H_1$ containing $v_1$ we apply $\hat\Psi_t$.  This step also results in a configuration which only fails to be in $P(L)$ by exactly the map which regularizes $W_x$ and extends preserving distance and orientation in the perpendicular direction.

Step 2c.  For $x\in\Lambda_{1/3}(\mathcal B)$ the limits of two processes 2a. and 2b. agree.

For $\frac 1 3 \le t\le \frac 2 3$, let $\Lambda_t$ be all three parts of step 2, simultaneously performed in the variable $3t-1$.

At the end of step 2 all that remains is to regularize $W_x$ in the hyperplane it spans, and extend to an orientation preserving isometry on $W_x^\perp$.  This is achieved by theorem \ref{regularization} with the time parameter $3t-2$.  This gives the final third of $\Lambda$.

We have given $\Lambda_t:C(L)\rightarrow P(L)$, a deformation retraction from the configuration space of the $(n-2)$-skeleton of the $(n+1)$-simplex linearly embedded in $\mathbb R^n$, thus proving Theorem \ref{P(L)}.  As noted before, this is a weak deformation retraction, as is, but can easily be reparametrized to be a strong deformation retraction.

\end{proof}

Theorem \ref{P(K)} and Theorem \ref{P(L)} along with Proposition \ref{cylinderprop} comprise the main result:

\begin{thm}\label{maintheorem}For $n>2$, $C(X)$ (for either $X=K,L$) has the homotopy type of the double mapping cylinder \[SO(n)/A_{n+1}\leftarrow SO(n)/A_n\rightarrow SO(n)/S_n,\] where $A_n$ is the alternating group and $S_n$ the symmetric group.\end{thm}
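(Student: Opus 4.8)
The plan is to obtain Theorem \ref{maintheorem} as a direct concatenation of the three results already in hand. First I would record the elementary observation that the (weak) deformation retractions of Theorems \ref{P(K)} and \ref{P(L)} are in particular homotopy equivalences onto their images: the homotopy $\Psi_t$ exhibits $C(K)\simeq P(K)$ and the homotopy $\Lambda_t$ exhibits $C(L)\simeq P(L)$. Proposition \ref{cylinderprop} then supplies a homeomorphism of \emph{each} of $P(K)$ and $P(L)$ with the double mapping cylinder $SO(n)/A_{n+1}\leftarrow SO(n)/A_n\rightarrow SO(n)/S_n$ whose structure maps are dual to $A_{n+1}\leftarrow A_n\rightarrow S_n$. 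Composing, one reads off $C(X)\simeq SO(n)/A_{n+1}\leftarrow SO(n)/A_n\rightarrow SO(n)/S_n$ for $X=K$ and for $X=L$ at once, which is the assertion.

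Two bookkeeping points should be addressed in passing. The retractions $\Psi_t$ and $\Lambda_t$ were built as \emph{weak} deformation retractions, the target $P(X)$ being carried along and flowing toward its regular stratum; since $P(X)$ is preserved throughout the homotopy, the restriction $\Psi_1|_{P(X)}$ (resp. $\Lambda_1|_{P(X)}$) is already homotopic to the identity of $P(X)$ through that flow, so a homotopy equivalence is in hand, and one may alternatively reparametrize in the time variable near $t=1$, as remarked just after Theorems \ref{P(K)} and \ref{P(L)}, to promote it to a strong deformation retraction; I would include the short reparametrization for cleanliness. Second, the hypothesis $n>2$ is not invoked afresh here but is exactly what legitimizes the inputs: Lemma \ref{solidangles} and the solid-angle map $\alpha$ need $n>1$, and the Radon-theoretic classification of the degeneracies of $C(K)$ and $C(L)$ via Theorem \ref{Radons}---at most one vertex in the convex hull of the rest, respectively at most one vertex on a would-be $(n-1)$-face---is precisely what makes $P(K)$ and $P(L)$ catch all of them. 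I would state this dependence explicitly.

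I do not anticipate a real obstacle at this final step, since all of the geometric and combinatorial labor has been discharged in constructing $\Psi_t$, $\Lambda_t$ and in identifying the pyramidal spaces with the mapping cylinder; the only thing to watch is to use the homeomorphism $P(K)\cong P(L)$ of Proposition \ref{cylinderprop} solely to conclude that both configuration spaces share a homotopy type, not to transport one of the two retractions onto the other. As a sanity check on the hypothesis it is worth noting that the statement genuinely collapses outside its range: for $n=2$, $C(K)$ and $C(L)$ are the unordered planar configuration spaces of $3$ and $4$ points, which are aspherical of different cohomological dimension and hence not homotopy equivalent to one another (let alone both to the given cylinder), and for one further vertex $C(K_6)$ in $\mathbb{R}^3$ fails to be connected by \cite{Huh}.
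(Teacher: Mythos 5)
Your proposal is correct and matches the paper's own treatment, which simply observes that Theorem \ref{P(K)}, Theorem \ref{P(L)}, and Proposition \ref{cylinderprop} together comprise the main result. The extra bookkeeping you supply (reparametrizing the weak deformation retractions and tracing where $n>2$ is actually used) is sound and only makes explicit what the paper leaves implicit.
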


The spaces $C(L),C(K)$ are covered by their respective labeled analogs $E(X)=Emb_{Linear}(X,\mathbb R^n)$, the space of linear embeddings of $X=C,L$ in $\mathbb R^n$.   The deformation retractions above lift to these covers, so that $E(X)$ deformation retracts to its subspace of labeled pyramids.  In the case $X=K$, $n>2$, the space of labeled pyramids is easily seen to be homeomorphic to $SO(n)\times S(n)$ where $S(n)$ is the graph resulting from the suspension of $n+1$ points (so $S(n)\simeq \bigvee^nS^1$, a wedge of $n$ circles).  The interiors of the edges parametrize the non-regular pyramids, so that the degenerate pyramids are at the midpoints and half-high pyramids are $1/4$ from either end, depending on the orientation of the labeling.  

In the case $X=L$, $n>2$, the labeled pyramids with full $A_n$ symmetry (i.e., a configuration made up of a regular simplex along with a vertex at its center) are partitioned into $n+1$ components, each one corresponding to the vertex at the barycenter of the others.  Any two such components are connected in $P(L)$ by a cylinder $SO(n)\times I$, the second component of which parametrizes the central vertex leaving through a face followed by the vertex oppose this face entering the simplex.  The homotopy type of $E(L)$ is therefore $SO(n)\times K_{n+1}\simeq SO(n)\times S(n)$.  We summarize these results as a corollary, as they follow from the deformation retractions of the respective configuration spaces.

\begin{cor} For $n>2$, $X=K,L$, the space of embeddings $E(X)$ has the homotopy type of $SO(n)\times S(n)$.
\end{cor}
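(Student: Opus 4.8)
The plan is to push the deformation retractions of Theorems~\ref{P(K)} and~\ref{P(L)} up the covering $E(X)\to C(X)$ and then identify the resulting retract explicitly. Forgetting the vertex labels of $X$ gives a map $p\colon E(X)=Emb_{\mathrm{Linear}}(X,\mathbb R^n)\to C(X)$ whose fibers are the orbits of the simplicial automorphism group $\mathrm{Aut}(X)$; this group is finite and acts \emph{freely} on the Hausdorff space $E(X)$ (it acts by relabeling, and an injective realization admits no nontrivial relabeling symmetry), so $p$ is a covering projection. After reparametrizing the weak retractions $\Psi_t$ and $\Lambda_t$ to be strong, as noted in the proofs of \ref{P(K)} and \ref{P(L)}, the homotopy lifting property lifts them to a deformation retraction of $E(X)$ onto $\tilde P(X):=p^{-1}(P(X))$, the space of \emph{labeled} pyramids; this lift is stationary on $\tilde P(X)$ for all time by uniqueness of path lifts, so it is again a deformation retraction. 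Hence $E(X)\simeq\tilde P(X)$, and the problem reduces to computing the homotopy type of $\tilde P(X)$.

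For $X=K$, a labeled pyramid is recorded by an ordered regular unit $(n-1)$-simplex --- the wide face, barycentered at $0$ and spanning an unoriented hyperplane $V$ --- together with a choice of apex among the $n+1$ labels and a signed height $u\in[-h,h]$ of that apex on $V^{\perp}$, where $h=\sqrt{(n+1)/(2n)}$, with $u=0$ the degenerate planar pyramid and $u=\pm h$ the two orientations of the regular $n$-simplex. For each fixed apex label the slab of such configurations is homeomorphic to $SO(n)\times[-h,h]$: once the sign of $u$ is accounted for one has a continuous choice of unit normal to $V$, and the placement of the resulting two-sided wide face is an $SO(n)$-torsor. The $n+1$ slabs are then glued along $u=+h$ (every orientation-preserving regular $n$-simplex is a pyramid over each of its facets) and along $u=-h$, giving $\tilde P(K)\cong SO(n)\times S(n)$ where $S(n)$ is the graph formed by attaching $n+1$ arcs along their two endpoints --- i.e.\ the suspension of $n+1$ points, the two poles being the oriented regular simplices and the $n+1$ equatorial vertices the degenerate pyramids. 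Since $S(n)$ is connected with $\chi=1-n$, we get $S(n)\simeq\bigvee^{n}S^{1}$ and $E(K)\simeq SO(n)\times S(n)$.

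For $X=L$ one runs the analogous bookkeeping from the description of $P(L)$. Applying Theorem~\ref{regularization} in dimension $n-1$ to the shared face reduces $\tilde P(L)$ to configurations in which $n+1$ of the vertices form a regular $n$-simplex $\Delta$ and the remaining vertex travels along a radial segment from the centroid of $\Delta$ to the centroid of a facet (the $\mathcal{I}$--$\mathcal{B}$ picture of Figure~\ref{figu7o3}). The $A_n$-symmetric configurations --- a regular $n$-simplex together with a vertex at its centroid --- organize into $SO(n)$-families indexed by the choice of central label, and two such families are joined in $\tilde P(L)$ by a cylinder $SO(n)\times I$ in which the central vertex slides out through a facet while the vertex opposite that facet slides in. Assembling these cylinders yields $\tilde P(L)\cong SO(n)\times\Gamma$ for a connected graph $\Gamma$ on the central-label classes whose edges are the admissible transitions; checking that $\Gamma$ has first Betti number $n$ then gives $\Gamma\simeq\bigvee^{n}S^{1}\simeq S(n)$ and $E(L)\simeq SO(n)\times S(n)$.

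The routine part of all of this is bookkeeping with the explicit coordinates already set up in Section~\ref{section 1}. The delicate point --- and the one I expect to be the main obstacle --- is verifying that the gluings of the slabs (for $K$) and cylinders (for $L$) do not twist the $SO(n)$ factor: changing the apex label, respectively the central label, coordinatizes $SO(n)$ by a different set of vertices, so one must check that the transition maps over the shared regular-simplex locus are isotopic to the identity, tracking carefully how the vertex permutations involved interact with orientation so that the pieces assemble into the honest product $SO(n)\times S(n)$ rather than a nontrivial $SO(n)$-bundle over the graph. In the $L$ case there is the additional task of pinning down the transition graph $\Gamma$ precisely and confirming $b_1(\Gamma)=n$. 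Granting these, the corollary is immediate from $S(n)\simeq\bigvee^{n}S^{1}$.
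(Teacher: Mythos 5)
Your plan follows the paper's own route: lift the deformation retractions of Theorems \ref{P(K)} and \ref{P(L)} through the covering $E(X)\to C(X)$ and identify the space of labeled pyramids as an $SO(n)$-bundle over a graph, necessarily trivial since the base is $1$-dimensional and $SO(n)$ is connected. For $X=K$ your bookkeeping is correct and agrees with the paper: the preimages of the three strata of $P(K)$ contribute $(n+1)!/|A_{n+1}|=2$ poles, $(n+1)!/|S_n|=n+1$ valence-two midpoint classes, and $(n+1)!/|A_n|=2(n+1)$ edges, so the graph is the suspension of $n+1$ points and $b_1=n$.

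The gap is exactly at the step you deferred for $X=L$: the verification that $b_1(\Gamma)=n$ is not routine bookkeeping --- it fails, and the corollary as stated does not appear to hold for $X=L$. The covering $E(L)\to C(L)$ has degree $(n+2)!$, not $(n+1)!$, so the same stratum count gives $(n+2)!/|A_{n+1}|=2(n+2)$ central components (a choice of central label \emph{and} an orientation --- the paper's own justification counts only $n+1$), $(n+2)!/|S_n|=(n+1)(n+2)$ bipyramid components each of valence two, and $(n+2)!/|A_n|=2(n+1)(n+2)$ edges. Hence $b_1(\Gamma)=2(n+1)(n+2)-2(n+2)-(n+1)(n+2)+c=(n+2)(n-1)+c$ with $c\ge 1$ the number of components, which is never $n$ for $n>2$; likewise the paper's identification $K_{n+1}\simeq S(n)$ is numerically correct only at $n=3$, and even there the vertex and edge counts do not match. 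A cross-check at $n=3$: $\pi_1(C(K_5))\cong 2T\ast_{\mathbb Z_6}\mathrm{Dic}_3$ has rational Euler characteristic $\tfrac1{24}+\tfrac1{12}-\tfrac16=-\tfrac1{24}$, so a component of the degree-$120$ cover $E(K_5)$ corresponds to a subgroup of Euler characteristic $-5/c$; writing that component as $SO(3)\times\Gamma_0$ with $\pi_1\cong\mathbb Z_2\times F_k$ forces $\tfrac12(1-k)=-5/c$, i.e.\ $k=1+10/c\ge 6$, never $3$. So your argument goes through and proves the corollary for $X=K$, but for $X=L$ the honest output of your own construction is $E(L)\simeq SO(n)\times\Gamma$ with $b_1(\Gamma)=(n+2)(n-1)+c$, not $SO(n)\times S(n)$; the discrepancy traces back to the fact that $E(K)$ and $E(L)$ are covers of \emph{different degrees} of the homotopy equivalent spaces $C(K)\simeq C(L)$, so the homotopy equivalence of the base spaces cannot be expected to lift.
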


%
%
%
%
%
%

\section{Regularizing Geometrically}

The ideas contained in this section were initial attempts at the regularizations in the deformation retractions of Section \ref{section 1}.  They are included only for their geometric appeal.  Nothing in this section strengthens the results of Section \ref{section 1}.

The case $n=3$ is special because there is a direct way to produce a regularization of tetrahedra from an equivariant orthogonalization of basis.  To each tetrahedron we assign what we will call its {\em bimedian basis} which is the (unordered) collection of 3 line segments joining midpoints of opposite (necessarily skew) edges (see figure \ref{bimedianbasis}).  These line segments intersect at the barycenter, which bisects each line segment.  It is easy to verify that $E$ the {\em standard bimedian basis}--i.e., the basis formed by the standard basis vectors and their negations--has exactly 2 tetrahedra which have $E$ for a bimedian basis, which differ by the reflection $-I$.  Any other bimedian basis is then the image of this one under an invertible linear map (modulo translation), so that the space of tetrahedra is a double cover of the space of bimedian bases.   Any deformation retraction of $GL(3)$ to $O(3)$ which is signed permutation equivariant (i.e., equivariant in $\mathbb Z_2\wr S_3<O(n)$) descends to a deformation retraction of bimedian bases to the {\em orthonormal bimedian bases}, which then lifts to the double cover, resulting in regularizing of the tetrahedra.  Thus the L\"owdin process gives a regularization of tetrahedra in $\mathbb R^3$.

This process does not generalize to arbitrary $n$ in any obvious way.  It relies on a homomorphism from $S_{n+1}$, the symmetries of the $n$-simplex, to $\mathbb Z_2\wr S_n$, the symmetries of the bimedian basis.  The image of this homomorphism must at least generate $S_n<\mathbb Z_2\wr S_n$, and so must be injective for $n>3$, since $S_{n+1}$'s only normal subgoup is $A_{n+1}$.  The respective orders are $(n+1)!$ and $2^nn!$, thus such a method can only exist when $n=2^k-1$ for some $k$.

\begin{figure}[h] \centering \def\svgwidth{150pt} 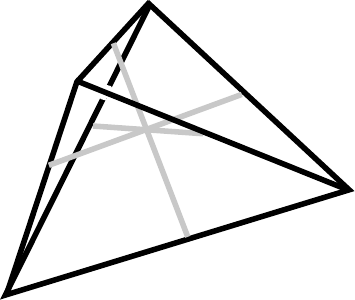 \caption{The bimedian basis of a tetrahedron is shown in gray.}
\label{bimedianbasis}
\end{figure}

For general $n$, one might regularize a simplex by inflating its insphere while fixing the volume of the simplex.  We show here that indeed this works.

\begin{lemma} For $r_x$ the inradius of an $n$-simplex $x$ in $\mathbb R^n$ we have
\[r_x=n \cdot Vol(x)/Vol(\partial x)\]
\end{lemma}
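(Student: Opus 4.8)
The plan is to decompose the simplex $x$ into $n+1$ smaller simplices by coning each facet of $\partial x$ to the incenter, and then add up volumes. Let $z$ denote the incenter of $x$ and let $r_x$ be the inradius, i.e. the common distance from $z$ to each of the $n+1$ facets $F_0,\ldots,F_n$ of $x$. Since $z$ lies in the interior of $x$, the simplex is the union of the $n+1$ pyramids $P_i = \mathrm{conv}(\{z\}\cup F_i)$, and these pyramids have pairwise disjoint interiors (they meet only along lower-dimensional faces). Hence $Vol(x) = \sum_{i=0}^n Vol(P_i)$.

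The key step is the elementary cone-volume formula: the $n$-dimensional volume of a pyramid over an $(n-1)$-dimensional base $B$ with apex at height $h$ above the hyperplane spanned by $B$ is $\tfrac1n\, h\cdot Vol_{n-1}(B)$. Applying this with $B = F_i$ and $h = r_x$ (which is exactly the distance from the apex $z$ to the affine hyperplane containing $F_i$, by definition of the incenter and inradius) gives $Vol(P_i) = \tfrac1n\, r_x\, Vol_{n-1}(F_i)$. Summing over $i$ and using $Vol(\partial x) = \sum_{i=0}^n Vol_{n-1}(F_i)$ yields
\[
Vol(x) \;=\; \sum_{i=0}^n \frac{1}{n}\, r_x\, Vol_{n-1}(F_i) \;=\; \frac{r_x}{n}\, Vol(\partial x),
\]
which rearranges to the claimed identity $r_x = n\cdot Vol(x)/Vol(\partial x)$.

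The only real point needing care — the ``main obstacle'' such as it is — is justifying that the incenter lies strictly inside $x$ and that the $n+1$ cones $P_i$ tile $x$ with disjoint interiors, so that volumes add without overlap correction. This is standard: the incenter is the unique point equidistant from all facet-hyperplanes lying on the interior side of each, it exists and is interior for any nondegenerate simplex, and a point $y$ lies in $P_i$ iff its barycentric coordinate opposite $F_i$ is dominated appropriately — more concretely, writing $y$ in barycentric coordinates and $z$ likewise with all coordinates positive, each $y\in x$ lies in the cone from $z$ through some facet, and two such cones share only boundary points. One should also note the cone-volume formula itself follows from the change-of-variables / base-times-height computation (integrate the scaled cross-sections $t^{n-1}Vol_{n-1}(F_i)$ for $t\in[0,1]$), so no machinery beyond elementary integration is needed. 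I would state the tiling claim as a one-line remark rather than belabor it, since for a nondegenerate simplex it is visually and combinatorially transparent.
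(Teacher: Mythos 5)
Your proof is correct and follows exactly the paper's argument: cone $x$ over each facet to the incenter, apply the base-times-height-over-$n$ formula with height $r_x$, and sum over the facets. The extra care you take with the tiling claim and the cone-volume formula is fine but not a different route.
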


\begin{proof}
Realize $x$ as a cone over $\partial x$ to the incenter.  Partition this cone into the cones over each face $f_i$.  The volume of the cone over $f_i$ is $\frac 1 n\cdot r_x\cdot Vol(f_i)$.  Summing over the faces gives the result.  (See figure \ref{coneoverface}).
\end{proof}

\begin{figure}[h] \centering \def\svgwidth{150pt} 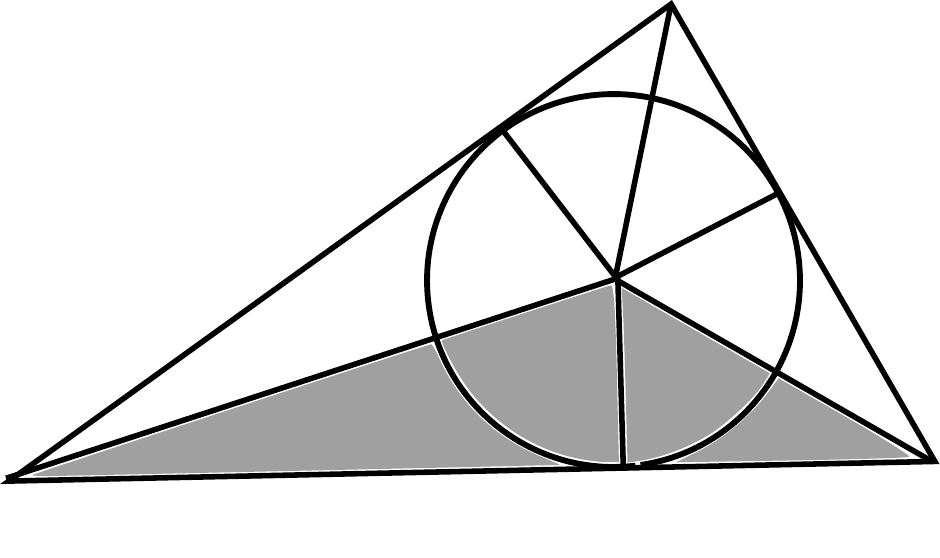 \caption{The volume of the simplex is disassembled into simplices with height $r_x$ above base face $f_i$.}
\label{coneoverface}
\end{figure}

By the above, flowing along the gradient of $Vol(\text{insphere}(x))$ constrained to a fixed simplex volume is the same as flowing to minimize the surface volume, with the same constraint.  We consider the component of this flow in the direction which fixes a base face $f_v$ and moves its opposite vertex $v$ at height $H$ above $f_v$, to minimize $Vol(\partial x_t)$ to prove the following.

\begin{lemma} The flow which minimizes the surface volume of a simplex $x$, subject to maintaining a fixed volume, results in a simplex where each vertex is directly over the incenter of its oppose face.
\end{lemma}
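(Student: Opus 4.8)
The plan is to reduce the statement to a single optimization problem in one vertex at a time. Fix a vertex $v$ with opposite face $f_v$ and, after a rigid motion, put $\mathrm{aff}(f_v)=\mathbb R^{n-1}\times\{0\}\subset\mathbb R^n$, so that $v=(p,H)$ with $H$ its height over the base. Since $\mathrm{Vol}(x)=\tfrac1n H\cdot\mathrm{Vol}_{n-1}(f_v)$, sliding $v$ within the hyperplane $\mathbb R^{n-1}\times\{H\}$ (all other vertices fixed) is exactly the family of volume-preserving deformations supported on $v$, and along it only the total volume of the $n$ lateral faces $g_j=\mathrm{conv}(v,e_j)$ varies, where $e_1,\dots,e_n$ are the facets of $f_v$. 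Letting $u_j$ be the inward unit normal of $e_j$ inside $\mathrm{aff}(f_v)$ and $\delta_j(p)$ the corresponding signed distance from $p$ to $\mathrm{aff}(e_j)$, an orthogonal decomposition in $\mathbb R^n$ gives $\mathrm{dist}(v,\mathrm{aff}(e_j))=\sqrt{\delta_j(p)^2+H^2}$, so I would study
\[
F(p)\;=\;\sum_{j=1}^{n}\mathrm{Vol}_{n-1}(g_j)\;=\;\frac1{n-1}\sum_{j=1}^{n}\mathrm{Vol}_{n-2}(e_j)\,\sqrt{\delta_j(p)^2+H^2},
\]
which is precisely the quantity the flow decreases in the $v$-direction.

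Next I would observe that $F$ is strictly convex and coercive: each $\delta_j$ is affine with linear part $\langle\,\cdot\,,u_j\rangle$, the function $t\mapsto\sqrt{t^2+H^2}$ is strictly convex, the $u_j$ positively span $\mathbb R^{n-1}$ (they are the facet normals of a nondegenerate $(n-1)$-simplex), and $F(p)\ge c\max_j|\delta_j(p)|\to\infty$. Hence $F$ has a unique critical point, which is its global minimum, and the critical-point equation is
\[
\nabla F(p)\;=\;\frac1{n-1}\sum_{j=1}^{n}\mathrm{Vol}_{n-2}(e_j)\,\frac{\delta_j(p)}{\sqrt{\delta_j(p)^2+H^2}}\,u_j\;=\;0.
\]

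Then the point is to verify that the incenter $p^\ast$ of $f_v$ solves this equation. By definition $p^\ast$ is equidistant from all facets of $f_v$, i.e. $\delta_j(p^\ast)=r$ for every $j$, with $r$ the inradius of $f_v$; substituting, the common scalar $r/\sqrt{r^2+H^2}$ pulls out and what remains is $\sum_j\mathrm{Vol}_{n-2}(e_j)\,u_j$, which vanishes by Minkowski's relation for the closed $(n-1)$-polytope $f_v$ (equivalently, the divergence theorem applied to a constant vector field). Thus $\nabla F(p^\ast)=0$, forcing $p^\ast$ to be the minimizer: at equilibrium $v$ sits on the line through the incenter of $f_v$ perpendicular to $\mathrm{aff}(f_v)$, i.e. directly over it. Carrying this out simultaneously for all $n+1$ vertices --- each sliding deformation being volume-preserving, a fixed point of the flow must be stationary for all of them at once --- yields the lemma.

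I do not expect a serious obstacle here; the content is two classical facts combining cleanly. The steps that need care are (i) the orthogonal-decomposition identity $\mathrm{dist}(v,\mathrm{aff}(e_j))=\sqrt{\delta_j(p)^2+H^2}$, which is what turns $F$ into a sum of Euclidean norms of affine maps and makes convexity transparent, and (ii) lining up ``incenter $=$ equidistant from the facets'' with the vanishing of the weighted normal sum $\sum_j\mathrm{Vol}_{n-2}(e_j)u_j$; once these are in place, strict convexity and coercivity do the rest.
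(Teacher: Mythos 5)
Your stationarity analysis is correct and is essentially the same reduction the paper makes: fix the base face $f_v$, note that volume-preserving motions supported on $v$ are exactly the horizontal slides at height $H$, and minimize the lateral area $\frac{1}{n-1}\sum_j \mathrm{Vol}_{n-2}(e_j)\sqrt{\delta_j^2+H^2}$. Where you differ is only in the finish: the paper parametrizes the position of $v$ by the $n$ affinely dependent signed distances $a_i$, imposes the constraint $\sum a_i/A_i=1$ via a Lagrange multiplier, and uses the identity $A_i\,\mathrm{Vol}(g_i)=(n-1)\mathrm{Vol}(f_v)$ to force $a_i=a_j$ directly, i.e.\ it derives ``critical $\Rightarrow$ equidistant'' in one pass. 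You instead verify that the incenter satisfies the unconstrained first-order condition via the Minkowski relation $\sum_j \mathrm{Vol}_{n-2}(e_j)u_j=0$ and then appeal to strict convexity and coercivity for uniqueness. Both are sound; your version buys a clean existence-and-uniqueness statement for the minimizer (the paper only asserts positivity of the $a_i$ at the minimum), at the cost of needing the observation that the facet normals of a nondegenerate $(n-1)$-simplex span $\mathbb R^{n-1}$, which you do supply.

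There is, however, a genuine gap: you have characterized the \emph{fixed points} of the flow, but the lemma asserts that the flow \emph{results in} such a simplex, and you give no argument that a trajectory converges to a fixed point at all. The paper spends the second half of its proof on exactly this: ruling out that a vertex runs off to arbitrary distance from the incenter (by noting that the cone from a far vertex over the insphere is contained in $x_t$, so fixed volume would force the inradius to vanish, contradicting that the flow increases it), ruling out collapse via the bound $2r\le \ell\le |(v,w)|$, and (just after the lemma) ruling out oscillation between distinct regular limits by similarity-equivariance of the flow and constancy of $\mathrm{Vol}(x)/\mathrm{Vol}(\mathrm{insphere}(x))$ on similarity orbits. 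Without some compactness or Lyapunov-type argument of this kind, ``the unique constrained critical point is the incenter configuration'' does not by itself yield the conclusion; you would need to add a paragraph showing the trajectory stays in a compact set of nondegenerate simplices modulo similarity, so that the decreasing surface volume forces convergence to the critical point you identified.
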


\begin{proof}
Let the $n$ $(n-2)$-dimensional faces of $f_v$ be indexed as $g_i$, and denote the $(n-1)$-dimensional face containing $g_i$ and $v$ with $\bar g_i$.  Let $a_i$ be the signed distance from the projection of $v$ on the hyperplane containing $f_v$ to $g_i$, signed so that $a_i$ is positive whenever the projection of $v$ is in $f_v$ (see figure \ref{triangleproj}).  We have \[Vol(\bar g_i)=\frac 1 {(n-1)}\cdot Vol(g_i)\cdot \sqrt{H^2+a_i^2},\] so that
\[Vol(\partial x)=Vol(f_v)+\frac 1 {(n-1)}\sum Vol(g_i)\sqrt{H^2+a_i^2}.\]

\begin{figure}[h] \centering \def\svgwidth{150pt} 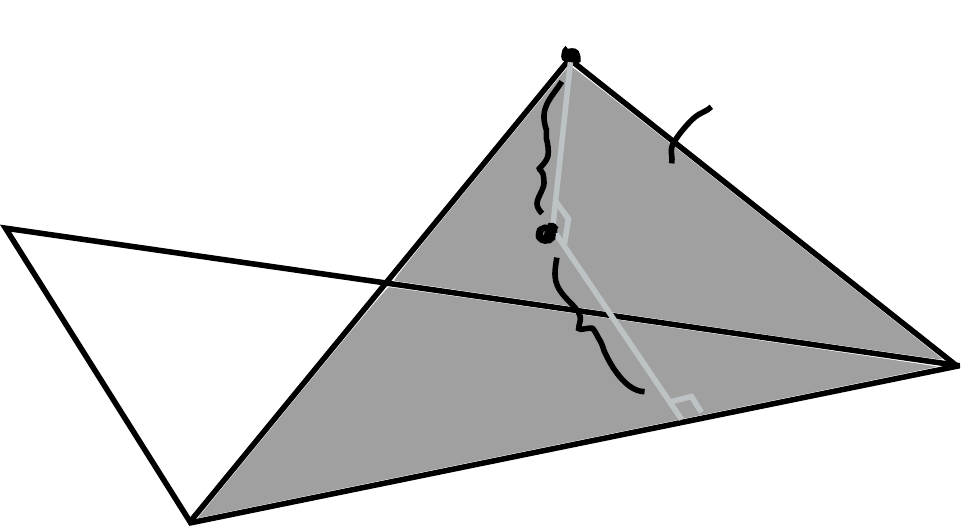 \caption{This figures illustrates $a_i$, $g_i$ and $\bar g_i$ for the $n=3$ case.}
\label{triangleproj}
\end{figure}

Any $a_i$ depends affine-linearly on the others, since removing any one gives a coordinate system, so that 
\begin{equation}1=\sum C_ia_i\end{equation} for some constants $C_i$.  The value of $a_i$ for $v$ over the $i$th vertex, for which all other $a_j$'s are 0, is the altitude $A_i$ of that vertex in $f_v$, giving $C_i=\frac 1 {A_i}$ and
\begin{equation} A_i Vol(g_i)=(n-1) Vol(f_v).
\end{equation}
Then (3) gives the constraint $\sum \frac {a_i} {A_i}=1$ and using the method of Lagrange multipliers we get the system 
\[\sum \frac {a_i} {A_i}=1\]
and
\[\frac 1 {(n-1)}\frac {Vol(g_i)\cdot a_i}{\sqrt{H^2+a_i^2}}=\frac \lambda {A_i},\]
which using (4) simplifies to
\[Vol(f_v)\cdot \frac {a_i} {\sqrt{ H^2+a_i^2}}=\lambda\]
which gives
\[\frac {a_i} {\sqrt{H^2+a_i^2}}=\frac {a_j}{\sqrt{H^2+a_j^2}}\]
implying
\[a_i^2(H^2+a_j^2)=a_j^2(H^2+a_i^2)\]
which necessitates $a_i=a_j$ since both are positive where a minimum is achieved.

It is therefore the case that volume of the boundary is minimized when the vertices are directly over the incenters of their respective opposite faces.

It remains to argue that such a trajectory actual terminates in a simplex with the property that the vertices are directly over the incenters of their opposite faces, as opposed to escaping to ``infinity'' or limiting to more than a single point.

Note first that if we have vertices of arbitrary distance $d$ from the incenter, then the cone formed by the vertex and the insphere (i.e., truncate it where its boundary intersects the insphere) is contained in the simplex $x_t$, and has volume with $\liminf$ equal to that of $d\cdot c\cdot (1/n)$ where $c$ is the volume of the $(n-1)$-ball spanned by a great sphere of the insphere.  Then that the volume of $x_t$ is fixed and is an upper bound for this cone, necessitates that the inradius vanishes, contradicting the construction of the flow.  Also note that if $\ell$ is the altitude of $v$ and $w$ is the closest vertex of $x$ to $v$ with edge length $|(v,w)|$, then for $2r$ the indiameter, we have $2r\le \ell \le |(v,w)|$, so that again $r$ must vanish, contradicting the construction of the flow. (Figure \ref{escapetoinfty} illustrates these two arguments).  Translation to infinity is clearly not a concern.  For example we can further stipulate that the incenter is fixed at the origin.


\begin{figure}[h] \centering \def\svgwidth{250pt} 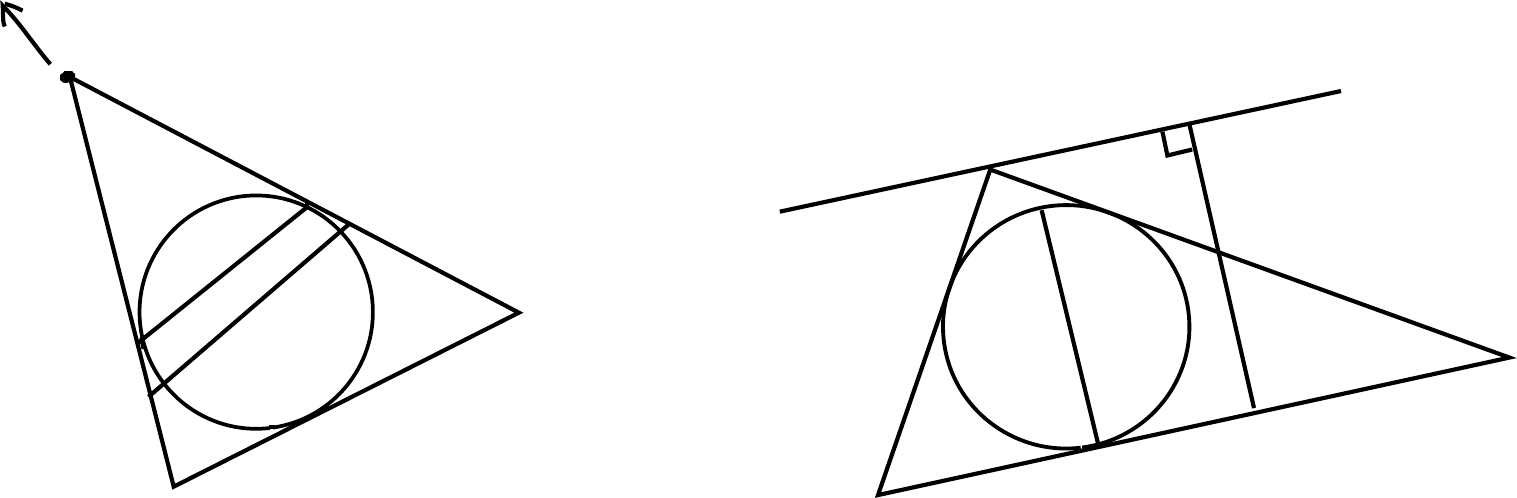 \caption{The trajectory does not escape to infinity.}
\label{escapetoinfty}
\end{figure}

 \end{proof}
 
 \begin{lemma} A simplex for which each vertex orthogonally projects to the incenter of the opposite face is a regular simplex.
\end{lemma}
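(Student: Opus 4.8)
The plan is to show that the hypothesis forces every dihedral angle of the simplex to be the same, and then to pin that angle down through the Gram matrix of the facet normals. Write $x=\mathrm{conv}\{v_0,\dots,v_n\}$, let $f_i$ be the facet opposite $v_i$, let $r_{ij}=f_i\cap f_j$ be the ridge opposite the edge $v_iv_j$, let $H_i$ be the length of the altitude from $v_i$ to $\mathrm{aff}\,f_i$, let $\rho_i$ be the inradius of the $(n-1)$-simplex $f_i$, and let $\theta_{ij}\in(0,\pi)$ be the dihedral angle of $x$ along $r_{ij}$. The hypothesis says that the foot $h_i$ of the altitude from $v_i$ is the incenter of $f_i$; in particular $h_i$ lies in the relative interior of $f_i$, and since the facets of the $(n-1)$-simplex $f_i$ are precisely the ridges $r_{ij}$ with $j\neq i$, the defining property of the incenter gives $\mathrm{dist}(h_i,\mathrm{aff}\,r_{ij})=\rho_i$ for every $j\neq i$.

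The first step is the identity $\tan\theta_{ij}=H_i/\rho_i$. I would project $\mathbb R^n$ orthogonally onto the $2$-plane $P$ that is the orthogonal complement of the direction space of $\mathrm{aff}\,r_{ij}$. Under this projection $r_{ij}$, together with every vertex $v_k$ for $k\neq i,j$, collapses to a single point $O$; the vertices $v_i,v_j$ go to points $V_i,V_j$; the facets $f_i,f_j$ go to the segments $[O,V_j]$ and $[O,V_i]$; and $x$ goes to the triangle $OV_iV_j$, whose angle at $O$ is by definition the dihedral angle $\theta_{ij}$. Because $v_i-h_i$ is orthogonal to $\mathrm{aff}\,f_i\supseteq\mathrm{aff}\,r_{ij}$, it lies in the direction space of $P$, so the projection restricts to an isometry on the segment $v_ih_i$ and carries $h_i$ to the foot of the perpendicular from $V_i$ to the line $OV_j$. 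Hence the right triangle with legs of lengths $H_i$ (the altitude) and $\mathrm{dist}(h_i,\mathrm{aff}\,r_{ij})=\rho_i$ has angle $\theta_{ij}$ at $O$, i.e.\ $\tan\theta_{ij}=H_i/\rho_i$. (Interiority of $h_i$ in $f_i$ guarantees the image point lies on the ray $OV_j$, so no sign issue arises; in fact $\theta_{ij}<\pi/2$.) The right-hand side is independent of $j$, and since $r_{ij}$ is equally a ridge of $f_j$, the same computation from the $f_j$ side gives $H_i/\rho_i=\tan\theta_{ij}=H_j/\rho_j$. As any two facets of a simplex share a ridge, the ratio $H_i/\rho_i$ is the same for all $i$, and therefore all the dihedral angles $\theta_{ij}$ are equal to a common value $\theta$.

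To finish, let $u_0,\dots,u_n$ be the outward unit normals of $f_0,\dots,f_n$, so that $\langle u_i,u_j\rangle=-\cos\theta$ for $i\neq j$ while $\langle u_i,u_i\rangle=1$; the Gram matrix is $G=(1+\cos\theta)I-(\cos\theta)J$ with $J$ the all-ones matrix. Being the Gram matrix of vectors spanning $\mathbb R^n$, $G$ is positive semidefinite of rank $n$, and its kernel is spanned by the all-ones vector; since the only linear relation among the $u_i$ is Minkowski's relation $\sum_i \mathrm{Vol}_{n-1}(f_i)\,u_i=0$, that relation is proportional to the kernel vector and hence the facet volumes $\mathrm{Vol}_{n-1}(f_i)$ are all equal. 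The eigenvalues of $G$ are $1+\cos\theta$ with multiplicity $n$ and $1-n\cos\theta$ on the all-ones vector, so rank $n$ forces $\cos\theta=1/n$, which is exactly the dihedral angle of a regular simplex. Thus $G$ agrees with the Gram matrix of the outward facet normals of a regular $n$-simplex, so the $u_i$ are carried to the normals of a regular simplex by an element of $O(n)$; since a simplex is determined up to translation by its facet normals and facet volumes (Minkowski's uniqueness theorem) and the facet volumes are all equal, $x$ is similar to a regular simplex, hence regular.

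I expect the main obstacle to be the geometric identity $\tan\theta_{ij}=H_i/\rho_i$: one must check carefully that the orthogonal projection onto $P$ really sends the local dihedral configuration to the planar triangle described and that the foot of the altitude is carried to the foot of the corresponding planar perpendicular — this is precisely where the hypothesis that $h_i$ is interior to $f_i$ is used. Once that identity is in hand, the remaining steps — equality of all dihedral angles, the eigenvalue computation forcing $\cos\theta=1/n$, and the appeal to Minkowski's theorem — are routine.
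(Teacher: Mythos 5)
Your proof is correct and follows essentially the same route as the paper: both arguments extract the relation between the dihedral angle along a ridge, the altitude, and the inradius of the opposite facet (the paper's ``congruent right triangles'' with legs $H$ and $\rho$ at the face incenter are exactly your identity $\tan\theta_{ij}=H_i/\rho_i$), and conclude that all dihedral angles, equivalently all pairwise inner products of the facet normals, coincide. The only divergence is the endgame: where the paper asserts that the equiangular family of inradius vectors ``defines a simplex with full symmetry,'' you pin down $\cos\theta=1/n$ from the Gram-matrix eigenvalues and invoke Minkowski's uniqueness theorem --- a more explicit finish, but not a different method.
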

\begin{proof}
Let $v,w$ be vertices of the simplex $x$, $c_v$ be the incenter of the face $f_v$ opposite $v$ and $r_w$ be the outward pointing radial vector from $c_v$ to the codimension 2 face excluding $w$ and $v$ (Figure \ref{incentersym} is helpful).  Note that the $r_w$ form congruent right triangles with $v-c_v$, and that on the $i$th face the gradient of the distance to $f_v$, (at $c_v+r_w$ in $f_w$) is the hypotenuse of the right triangle containing $r_w$.  Thus the point on $v-c_v$ which is equidistant to some face and to $c_v$ is actually the incenter $c_x$.

\begin{figure}[h] \centering \def\svgwidth{150pt} 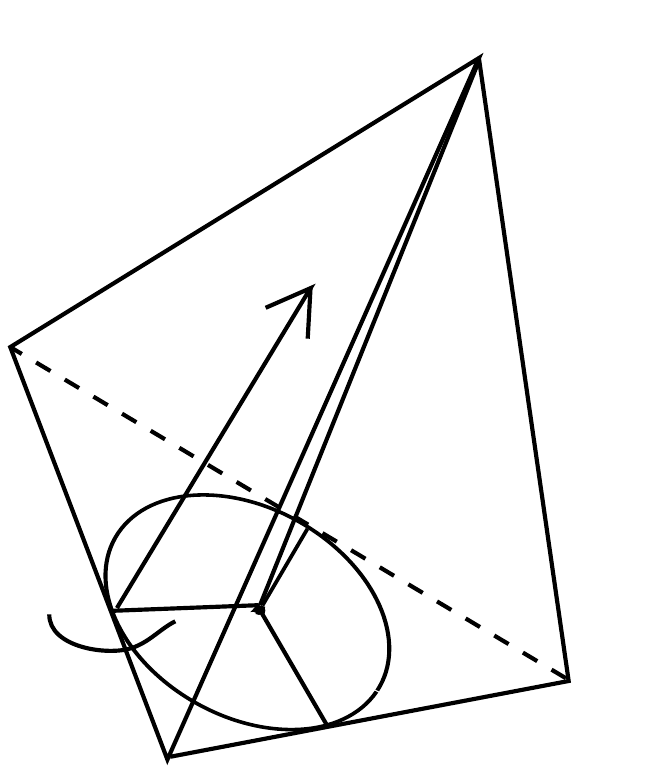 \caption{The condition that each vertex is over its opposite incenter implies regularity.}
\label{incentersym}
\end{figure}

It is therefore the case that $c_x$ projects orthogonally to $c_v$ and all other faces have equal pitch relative to $f_v$.  That is, for $c_w$ the outward pointing vector from $c_x$ to $f_w$ realizing the inradius, we have that $c_w\cdot c_u=c_w\cdot c_y$ for all distinct $u,w,y$.  It follows that the simplex they define has full symmetry.
\end{proof}

That a single regular simplex is the limit of any trajectory follows from the fact that the flow is similarity-equivariant.  Specifically, the orbit of a simplex under similarity transformations is contained in a level set of the {\em irregularity potential} function $Vol(x)/Vol(\rm{insphere}(x))$, so it cannot be the case that two distinct regular simplices, which differ by a similarity transformation, are limit points for some trajectory.

The previous two lemmas piece together to give the following theorem.
\begin{thm}\label{simplexTheorem}
The deformation retraction of Theorem \ref{regularization} is achieved by the flow which increases the inradius of the simplex while keeping its volume fixed.\\
\end{thm}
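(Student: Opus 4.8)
The plan is to recognize the flow in the statement as the constrained negative-gradient flow of the surface-volume functional and then to read off everything from the two lemmas just proved. First I would fix a slice on which to work: let $\mathcal S$ be the space of non-degenerate $n$-simplices in $\mathbb R^n$, taken up to translation and with volume normalized to $1$ (equivalently, with incenter pinned at the origin, as the text suggests). This is a smooth, indeed real-analytic, manifold, the normalizations are preserved by the flow below, and since $r_x=n\,\mathrm{Vol}(x)/\mathrm{Vol}(\partial x)$ by the inradius formula established above, on $\mathcal S$ increasing the inradius is literally the same as decreasing $\mathrm{Vol}(\partial x)$. Define $F_t$ to be the flow on $\mathcal S$ of $-\nabla\,\mathrm{Vol}(\partial x)$, the gradient being taken in vertex coordinates and projected onto the tangent space of the volume constraint; this is the flow of the theorem. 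Note $\mathrm{Vol}(\partial x)=\sum_i\mathrm{Vol}(f_i)$ and each $\mathrm{Vol}(f_i)$ is a square root of a Gram determinant, hence real-analytic on $\mathcal S$.

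Next I would establish global existence by the compactness estimates already present in the proof of the second lemma. Along a trajectory $\mathrm{Vol}(\partial x)$ is non-increasing while $\mathrm{Vol}(x)\equiv 1$, so $r_x$ is non-decreasing; in particular $r_x\ge r_0>0$ for all time, where $r_0$ is the initial inradius, so the trajectory never meets the degenerate locus. The cone-over-the-insphere estimate (a vertex at distance $d$ from the incenter forces $\mathrm{Vol}(x)\gtrsim d\cdot r_0^{\,n-1}$) together with the altitude bound $2r_0\le \ell\le |v-w|$ confine the trajectory to a subset of $\mathcal S$ with compact closure inside $\mathcal S$; hence $F_t$ is defined for all $t\ge 0$.

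Then I would identify the limit. Because $\mathrm{Vol}(\partial x)$ is real-analytic on $\mathcal S$, the {\L}ojasiewicz gradient inequality guarantees that each trajectory converges as $t\to\infty$ to a single critical point of $\mathrm{Vol}(\partial x)|_{\mathcal S}$. At a critical point, moving any one vertex with its opposite face held fixed (subject to the volume constraint) cannot decrease $\mathrm{Vol}(\partial x)$; by the lemma on the surface-minimizing flow this forces that vertex to project orthogonally to the incenter of its opposite face, and applying this to all $n+1$ vertices shows the critical set of $\mathrm{Vol}(\partial x)|_{\mathcal S}$ is exactly the set of simplices every vertex of which lies over the incenter of the opposite face. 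By the regularity lemma these are precisely the regular simplices, and conversely each regular simplex is evidently critical. Hence $F_\infty(x):=\lim_{t\to\infty}F_t(x)$ is a regular simplex for every $x$, and $F_t$ fixes each regular simplex for all $t$.

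Finally I would package this as a deformation retraction: reparametrize time by $\sigma=1-e^{-t}$, set $G_\sigma=F_{-\log(1-\sigma)}$ for $\sigma<1$ and $G_1=F_\infty$. Then $G_0=\mathrm{id}$, $G_1$ lands in the space of regular simplices, and $G_\sigma$ fixes regular simplices; continuity on $[0,1)\times\mathcal S$ is the usual smooth dependence of an ODE on initial data, and continuity at $\sigma=1$ follows from uniform convergence on compact families of initial data. Since inradius, volume and incenter are isometry-equivariant, the whole construction commutes with relabeling of vertices and with $O(n)$, and it is similarity-equivariant because $\mathrm{Vol}(x)/\mathrm{Vol}(\mathrm{insphere}(x))$ is scale-invariant; so it descends to unlabeled simplices and realizes the deformation retraction of Theorem~\ref{regularization}. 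The main obstacle is exactly the behavior at $\sigma=1$: that each trajectory limits to a single regular simplex rather than wandering along the critical manifold, and that this limit depends continuously on $x$. I would handle the single-point limit by real-analyticity ({\L}ojasiewicz) and the continuity by checking that the critical manifold of regular simplices is Morse--Bott --- i.e. that the Hessian of $\mathrm{Vol}(\partial x)$ is positive-definite transverse to the $O(n)$-orbit, which may itself require a genuine computation --- giving normal hyperbolicity and hence a continuous limit map; the similarity-equivariance remark in the text is precisely what rules out a trajectory accumulating on a positive-dimensional piece of that manifold.
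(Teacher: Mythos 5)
Your proposal follows the same route as the paper's: both convert the insphere-inflating flow into the constrained gradient flow of $-Vol(\partial x)$ via the inradius formula $r_x=n\,Vol(x)/Vol(\partial x)$, both use the Lagrange-multiplier computation to identify the critical configurations as those with every vertex over the incenter of its opposite face, both use the same compactness estimates (the cone over the insphere, and the bound $2r\le\ell\le|v-w|$) to keep trajectories away from the degenerate locus and from infinity, and both close with the lemma that the vertex-over-incenter condition forces regularity. The one place you genuinely diverge is the convergence of each trajectory to a \emph{single} regular simplex and the continuity of the limit in the initial datum. The paper disposes of this with a one-sentence appeal to similarity-equivariance --- similarity orbits lie in level sets of the irregularity potential, so two distinct regular simplices cannot both be limit points of one trajectory --- which, as stated, only shows that all limit points of a trajectory share a potential value (true for any gradient trajectory) and does not rule out the $\omega$-limit set being a positive-dimensional connected subset of the $O(n)$-orbit of regular simplices; nor does the paper address continuity of $x\mapsto\lim_t F_t(x)$, which your $G_\sigma$ needs at $\sigma=1$. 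Your appeal to real-analyticity and the \L ojasiewicz inequality for the single-point limit, plus a Morse--Bott/normal-hyperbolicity check for continuity of the limit map, supplies exactly the rigor the paper's remark elides, at the cost of one computation (positive-definiteness of the transverse Hessian) that you correctly flag as outstanding. In short: same skeleton, but your version identifies and repairs the weakest joint in the paper's own argument.
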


\begin{figure}[h!] \centering \def\svgwidth{180pt} 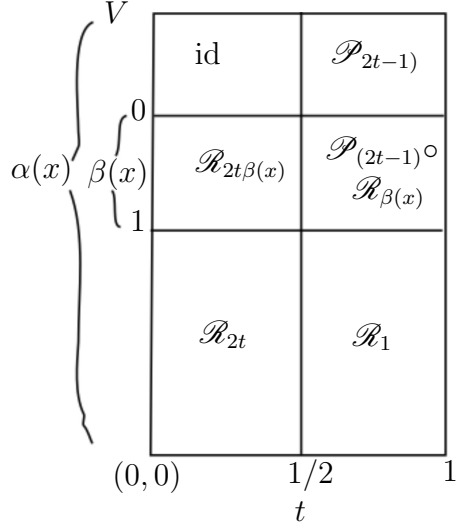 \caption{The schematic for gluing the regularization and preferred apex deformation retractions together.}
\label{schematic}
\end{figure}

\begin{prop} Any regularization deformation retraction of simplices can be extended to give a  deformation retraction from $C(K)$ to the pyramidal space $P(K)$.
\end{prop}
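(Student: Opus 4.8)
The plan is to mimic the proof of Theorem~\ref{P(K)}, replacing the L\"owdin regularization $\Omega$ by the given $\mathscr R$ and replacing the affine bookkeeping of $\Psi$ by the cruder ``partially regularize, then pyramidize'' scheme indicated in Figure~\ref{schematic}.

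\textbf{Setup.} By a regularization deformation retraction I mean a deformation retraction $\mathscr R_t$ of the space of non-degenerate $n$-simplices onto the regular ones which is natural under relabelling the vertices (so that it descends to unlabelled simplices, as does the one of Theorem~\ref{regularization}), together with its analogue $\mathscr R'$ in dimension $n-1$; if only the dimension-$n$ datum is supplied, take $\mathscr R'$ to be the regularization of Theorem~\ref{regularization}. From Lemma~\ref{solidangles} I have the continuous surjection $\alpha\colon C(K)\to(0,V]$; on $\alpha^{-1}((\tfrac12 V,V])$ a single vertex $v_x$ carries the solid angle exceeding $\tfrac12 V$, and on the degenerate stratum $\alpha^{-1}(V)$ that same vertex is, by the degeneracy analysis following Theorem~\ref{Radons}, the one lying in the convex hull of the others. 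Its opposite wide face $W_x$ is always a non-degenerate $(n-1)$-simplex (any $n$ of the $n+1$ vertices span one), and $v_x$, $W_x$ and the barycenter of $W_x$ depend continuously on $x$. Fix a continuous damping function $\beta\colon C(K)\to[0,1]$ with $\beta\equiv1$ on $\alpha^{-1}((0,\tfrac12 V])$ and $\beta(x)=0$ exactly on $\alpha^{-1}(V)$, say $\beta(x)=\min\{1,\,2-2\alpha(x)/V\}$, and set $\eta=1-\beta$.

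\textbf{The homotopy.} For $0\le t\le\tfrac12$ put $G_t(x)=\mathscr R_{2t\beta(x)}(x)$, tracking the image of $v_x$: we partially (fully, when $\beta(x)=1$) regularize $x$. This is continuous, equals the identity at $t=0$, is constant in $t$ on $\alpha^{-1}(V)$, and never leaves the non-degenerate strata on which $\mathscr R$ is defined, so $G_{1/2}(x)=\mathscr R_{\beta(x)}(x)$ is a regular simplex --- already a pyramid --- when $\alpha(x)\le\tfrac12 V$, and otherwise a partially regularized simplex with still non-degenerate wide face. For $\tfrac12\le t\le1$ I would apply to $G_{1/2}(x)$ the pyramidization $\mathscr P_{2t-1}$, defined from the carried data (the current image of $v_x$, the current image of $W_x$, and $\eta(x)$) as the composite of: regularizing the current wide face via $\mathscr R'$ inside the hyperplane it spans, extended by the identity perpendicularly; rescaling and translating so that the wide face is a regular unit-edged $(n-1)$-simplex with barycenter at the origin; and sliding the image of $v_x$ along the perpendicular axis through the origin to height $(1-\eta(x))\sqrt{(n+1)/2n}$ (all run in the single parameter $2t-1$). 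On $\alpha^{-1}((0,\tfrac12 V])$ every stage of $\mathscr P$ is trivial ($\eta=0$ and the face is already regular), so the second half is constant there; on $\alpha^{-1}(V)$, where $\eta=1$ and the first half did nothing, $\mathscr P$ carries $v_x$ onto the barycenter of $W_x$ and regularizes $W_x$, the degenerate pyramid; in between one lands on a pyramid of height $(1-\eta(x))\sqrt{(n+1)/2n}$ over a regular unit wide face, i.e.\ in $P(K)$. The homotopy stays inside $C(K)$: $\mathscr R$ and $\mathscr R'$ pass only through non-degenerate simplices, a similarity preserves $C(K)$, and the straight-line slide of $v_x$ toward a point at height $\ge0$ over the barycenter of $W_x$ can only ever produce the single admissible degeneracy ``$v_x$ in the convex hull of the others'' --- never a face of dimension $\ge2$ meeting its opposite face --- by the Radon dichotomy recalled after Theorem~\ref{Radons}.

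\textbf{Where the work is.} We have $G_0=\mathrm{id}$ and $G_1(C(K))\subseteq P(K)$; as in Theorem~\ref{P(K)}, the target $P(K)$ flows toward the regular subspace during $G$, so this is a weak deformation retraction, upgraded to a strong one by the same reparametrization remark made there. The real content is the continuity of $G$ across its two interfaces --- the ``subtlety'' already flagged for Theorem~\ref{P(K)}. Across $\alpha=\tfrac12 V$: the damped first half $\mathscr R_{2t\beta(x)}$ matches the undamped $\mathscr R_{2t}$ because $\beta=1$ there, and the second half must be the identity from both sides, which forces every correction inside $\mathscr P$ --- the height factor $1-\eta$, the slide, and the action of $\mathscr R'$ --- to degenerate to the identity as $\alpha\to\tfrac12 V^{+}$; the chosen $\beta$ makes $\eta\to0$ and $\mathscr R'$ fixes the already-regular wide face, so this holds, but it is exactly the point needing a careful check for a completely arbitrary $\mathscr R$. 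Across the degenerate stratum $\alpha^{-1}(V)$, where $\mathscr R$ is not even defined: $\beta=0$ makes the first half constant near the stratum (so $G_{1/2}(x)\to x$), while $\mathscr P$ at $\eta=1$ reduces to the straight-line slide of $v_x$ onto the barycenter of $W_x$ followed by $\mathscr R'$ on $W_x$, both of which extend continuously over the degenerate configurations since $W_x$ stays non-degenerate. I expect the first of these gluings --- choosing the parametrizations of $\beta$ and of the slide so that the $\alpha=\tfrac12 V$ interface is continuous no matter which $\mathscr R$ is plugged in --- to be the main obstacle; the compatibility of the stages at $t=\tfrac12$ and $t=1$, and the verification that $G_1$ meets the edge-length and barycenter normalizations built into the definition of $P(K)$, are then routine.
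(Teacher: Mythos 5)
Your overall scheme is the paper's: damp the given regularization $\mathscr R$ by a reparametrization $\beta$ of the solid-angle function $\alpha$ of Lemma \ref{solidangles}, then follow with a ``preferred apex'' pyramidization $\mathscr P$, glued exactly as in the schematic of Figure \ref{schematic}. But one choice where you diverge from the paper opens a genuine gap. You take $\beta(x)=\min\{1,\,2-2\alpha(x)/V\}$, which vanishes \emph{only} on the degenerate stratum $\alpha^{-1}(V)$, so for $x$ arbitrarily close to a degenerate configuration $x_0$ your first half still evaluates $\mathscr R_{2t\beta(x)}(x)$ with $\beta(x)>0$. Continuity of $G$ at $(t,x_0)$ then requires $\mathscr R_{2t\beta(x)}(x)\to x_0$ as $x\to x_0$, and this does not follow from $\beta(x)\to 0$ together with continuity of $\mathscr R$ on the open set of non-degenerate simplices: there is no uniformity in $\mathscr R_s(y)\to y$ as $s\to 0$ when $y$ approaches the boundary of the domain of $\mathscr R$. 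Concretely, precompose the L\"owdin retraction $\Omega$ with the time change $s\mapsto\min\{1,\,s/(V-\alpha(y))^2\}$; this is still a perfectly admissible (and label-equivariant) regularization, yet $\mathscr R_{2t\beta(x)}(x)=\Omega_1(x)$ for all $x$ sufficiently near the stratum and any fixed $t>0$, so $G_t(x)$ tends to a regular simplex rather than to $x_0$. No damping function that is positive off the stratum and chosen independently of $\mathscr R$ can survive this. That is exactly why the paper takes $\beta=3-4\alpha(x)/V$ truncated to $[0,1]$: it is identically $0$ on all of $\alpha^{-1}([\tfrac34 V,V])$, so $\mathscr R$ is never invoked on a whole neighborhood of the degenerate stratum, and the only interface at which the damped $\mathscr R$ must degenerate to the identity, $\alpha=\tfrac34 V$, lies safely inside the region where $\mathscr R$ is jointly continuous.

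Relatedly, you flag the $\alpha=\tfrac12 V$ interface as the main obstacle; that gluing is in fact the benign one (both sides reduce to $\mathscr R_{2t}$ followed by an essentially trivial $\mathscr P$), and it is the $\alpha^{-1}(V)$ interface --- precisely the one you dismiss with ``$\beta=0$ makes the first half constant near the stratum, so $G_{1/2}(x)\to x$'' --- that fails. The repair is simply to make $\beta$ vanish on a neighborhood of the stratum, as the paper does; with that change the rest of your argument (the structure of $\mathscr P$, the Radon-type check that the apex slide stays in $C(K)$, the weak-versus-strong remark) goes through and matches the paper's proof. Your version of $\mathscr P$, which prescribes the terminal height $(1-\eta)\sqrt{(n+1)/2n}$, is if anything more explicit than the paper's about landing in the truncated pyramid space $P(K)$.
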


\begin{proof}
The parameter $\alpha$ resulting from Lemma \ref{solidangles} gives a preferred apex/face pair for configurations in $\alpha^{-1}(\frac 1 2 V,V]$.  These can be made into pyramids by sending the apex, along a straight line path parallel to its opposite face $F$, to be directly over the barycenter of $F$, followed by regularizing the wide face (via Theorem \ref{simplexTheorem} or \ref{regularization}).  Call this preferred apex deformation retraction $\mathscr P$ and call the regularizing deformation retraction $\mathscr R$.  Let 
\[\beta= \frac{3-4\alpha(x)}V\] be a reparametrization of $\alpha$ on $\alpha^{-1}(-\frac 1 2 V,\frac 3 4 V]$. Then $\mathscr R$ and $\mathscr P$ can be glued together by the schematic in Figure \ref{schematic}.  The entries in the diagram have been chosen so that the 6 regions glue together continuously.  \end{proof}

The idea is to use $\beta$ as a parameter with which to perform some of the regularization, followed by the preferred apex deformation retraction.  It is noted that, as in Section \ref{section 1}, pyramids for which $\beta(x)>0$ will move in the direction of being regular, so the deformation retraction is {\em weak} in the sense that the target space moves.  As before, it should be clear that a reparametrization can fix this, if such is called for.

%
%
%
%
%
%

\section{Presentation of $\pi_1(C(X))$ for $n=3$ and Action on $F_3$} 

In this section we give presentations for $\pi_1(C(X))$ in the case $n=3$, and show how this group acts on $F_3$.  The general $n$ case is similar to this specific case, although we do not present the general form here.

From Theorem \ref{maintheorem}, Van Kampen's theorem gives the fundamental group of $C(K)$ as $2 A_{n+1}\ast_{2 A_n} 2 S_n$, where the 2's represent the pull backs from the canonical quotient $q:{\rm spin}(n)\rightarrow SO(n)$.  For the case $n=3$, let $T=A_4$ be the orientation preserving isometries of the tetrahedron, and $\rm{Dic}_3$ be the {\em dicyclic} group $q^{-1}(D_3)$ for $D_3$ the symmetries of a triangle.  

\begin{cor}
For $n=3$, the fundamental group of $C(K)$ is 
\[2 T\ast_{\mathbb Z_6} \rm{Dic}_3\cong \langle X,R,S~|~X^2=R^3=S^3=(SR)^3,~XR=R^{-1}X\rangle\]
\end{cor}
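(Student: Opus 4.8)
The plan is to take the reduction already in hand and push it through group theory. The paragraph preceding the corollary records, via Theorem~\ref{maintheorem} and van Kampen, that $\pi_1(C(K))\cong 2A_4\ast_{2A_3}2S_3$, the ``$2$''s being preimages under $q\colon\mathrm{Spin}(3)\to SO(3)$. So for $n=3$ I would first name the three groups: $2A_4=2T$ is the binary tetrahedral group of order $24$; $2S_3=\mathrm{Dic}_3$ is the binary dihedral (dicyclic) group of order $12$; and $2A_3\cong\mathbb Z_6$, since the preimage in $\mathrm{Spin}(3)$ of the cyclic rotation group $A_3\cong\mathbb Z_3<SO(3)$ is again cyclic (the non-trivial binary cyclic groups of even order are cyclic). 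The amalgamating maps are the inclusions $q^{-1}(A_3)\hookrightarrow q^{-1}(A_4)$ and $q^{-1}(A_3)\hookrightarrow q^{-1}(S_3)$ induced by $A_3<A_4$ and $A_3<S_3$; both are injective with image the cyclic subgroup of order $6$ (conjugacy-unique in $2T$, outright unique in $\mathrm{Dic}_3$). Hence the content of the corollary is the purely group-theoretic identity
\[
2T\ast_{\mathbb Z_6}\mathrm{Dic}_3\;\cong\;\langle X,R,S\mid X^2=R^3=S^3=(SR)^3,\ XR=R^{-1}X\rangle .
\]

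To prove this I would (i) fix compatible presentations of the two factors, (ii) form the amalgam by juxtaposition, and (iii) simplify with Tietze transformations. For the dicyclic factor I would use the standard presentation $\mathrm{Dic}_3=\langle X,R\mid X^2=R^3,\ XRX^{-1}=R^{-1}\rangle$: here $R$ has order $6$ (from $XR^3X^{-1}=(XRX^{-1})^3=R^{-3}$ and $XR^3X^{-1}=XX^2X^{-1}=X^2=R^3$ one gets $R^6=1$), the element $R^3=X^2$ is the central involution, and $\langle R\rangle\cong\mathbb Z_6$ is exactly the subgroup dual to $A_3<S_3$. For $2T$ I would take a two-generator presentation of the binary polyhedral group $\langle 2,3,3\rangle$ on generators $R,S$ in which $R$ is the order-$6$ element generating the copy of $\mathbb Z_6$ dual to $A_3<A_4$; that such a presentation yields a group of order $24$ rather than some larger quotient is the classical fact that $\langle 2,3,3\rangle$ is the central $\mathbb Z/2$-extension of the $(2,3,3)$ von Dyck group $A_4$. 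Since a presentation of an amalgam $G_1\ast_H G_2$ is obtained by taking the union of presentations of $G_1$ and $G_2$ together with relators identifying chosen generating words of $H$, and here both factor presentations already share the single generator $R$ of $H=\mathbb Z_6$, the amalgam is presented by the bare union of the two relator sets; collecting the resulting chain of equalities then gives the displayed presentation.

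The step I expect to demand the most care is matching the two cyclic $\mathbb Z_6$'s: one must choose generators so that the $R$ coming from the $\mathrm{Dic}_3$ presentation and the $R$ in the $\langle 2,3,3\rangle$ presentation correspond under the gluing, check that the remaining ambiguity — an automorphism of $\mathbb Z_6$ — can be absorbed (for instance into $S\mapsto S^{-1}$), and then carry the Tietze reduction through to exactly the printed relators. A secondary point worth verifying head-on is that the geometric covering maps $SO(3)/A_3\to SO(3)/A_4$ and $SO(3)/A_3\to SO(3)/S_3$ really induce these particular inclusions of $\mathbb Z_6$ on $\pi_1$, rather than maps onto some other order-$6$ subgroups; this follows from the compatibility of the lift $S^3\to SO(3)$ with the chains $A_3<A_4<SO(3)$ and $A_3<S_3<SO(3)$, together with the freeness of the induced actions of $q^{-1}(A_3)$, $q^{-1}(A_4)$, $q^{-1}(S_3)$ on $S^3$ (which identifies each $\pi_1(SO(3)/G)$ with $q^{-1}(G)$).
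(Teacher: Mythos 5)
Your overall route is the same as the paper's: the paper offers nothing beyond the van Kampen decomposition $2A_4\ast_{2A_3}2S_3$, the identifications $2A_4=2T$, $2S_3=\mathrm{Dic}_3$, $2A_3\cong\mathbb Z_6$, and a geometric description of $X,R,S$, so writing down compatible presentations of the two factors sharing the generator $R$ of the amalgamating $\mathbb Z_6$ and taking their union is exactly the argument that is being left implicit. Your identification of the three groups, of the amalgamating inclusions, and of where the care is needed (matching the two copies of $\mathbb Z_6$ up to an absorbable automorphism) is all correct.

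However, the step you defer to the end --- ``carry the Tietze reduction through to exactly the printed relators'' --- cannot be completed as stated, because the printed relator $(SR)^3$ does not arise from a presentation of $2T$. The two-generator presentation of the binary tetrahedral group $\langle 2,3,3\rangle$ on a pair of order-$6$ elements is $\langle R,S\mid R^3=S^3=(SR)^2\rangle$; by contrast $\langle R,S\mid R^3=S^3=(SR)^3\rangle$ is the binary $(3,3,3)$ group, a central extension of the Euclidean triangle group $\Delta(3,3,3)\cong\mathbb Z^2\rtimes\mathbb Z_3$, hence infinite. The relation $R^3=S^3=(SR)^3$ is \emph{true} in $2T$ for a suitable choice of the two face rotations (then $SR$ lifts a third order-$3$ rotation), but it is not a \emph{complete} set of relations, and the union-of-presentations description of an amalgam requires complete presentations of the factors. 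In fact the group as printed is provably not $2T\ast_{\mathbb Z_6}\mathrm{Dic}_3$: send $R,S$ to rotations of the Euclidean plane by $2\pi/3$ about two distinct points and $X$ to a reflection in a line through the center of $R$; every printed relator holds (the common value maps to $1$), yet the image of $\langle R,S\rangle$ contains a nontrivial translation and is infinite, which is impossible for a homomorphic image of the amalgam, where $\langle R,S\rangle=2T$ has order $24$. So your method is sound but will terminate in the relator $X^2=R^3=S^3=(SR)^2$ (equivalently, you should choose $S$ so that $SR$ lifts an order-$2$ edge rotation); the exponent $3$ on $(SR)$ in the statement appears to be an error that no Tietze manipulation will reproduce.
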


Here $X$ is a rotation (which has order 4 in $\pi_1(C(K_4))$) which reflects a planar configuration via a rotation of $\pi$ in a given direction, and $R,S$ are two face rotations of the tetrahedron (up to conjugation by a path connecting the planar tetrahedra to the regular ones) as given in figure \ref{XRS}.\\

\begin{figure}
\centering
\includegraphics[scale=.75]{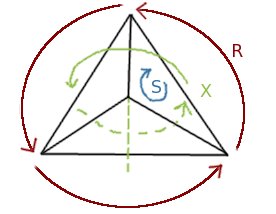}
\caption{The generators $X,R,S$.  We consider $R,S$ rigid motions of a regular configuration, while considering $X$ a rotation of $\pi$ of a planar configuration.}
\label{XRS}
\end{figure}

Another presentation of $\pi_1(C(K_4))$ is given in terms of loops from a base point in the planar configurations which transpose the center vertex and an extremal vertex by passing the center vertex up and over while passing the extremal vertex down and under.  (Figure \ref{generator1} shows such a generator).   This presentation has two advantages.  First, it is particularly simple and is symmetric, in the sense that $Aut(\pi_1(C(K_4)))$ acts transitively on it.  Second, it makes transparent the action of $\pi_1(C(K_4))$ on the free group on three generators $F_3$, the fundamental group of the complement of a given configuration.\\

\begin{figure}[h]
\centering
\includegraphics[scale=.6]{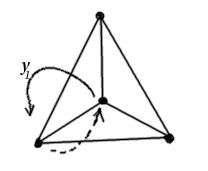}
\caption{The generator $y_1$ which transposes the center vertex with the one in position 1, by passing the center up and over while passing the extremal vertex down and under.}
\label{generator1}
\end{figure}

\begin{prop}
The fundamental group of $C(K_4)$ is generated by three elements $\{y_1,y_2,y_3\}$, subject to the following relations.  
\begin{align}
(y_j y_i^{-1})^3&=(y_k y_l^{-1})^3{\rm ~for ~neither ~side ~trivial},\tag{i}\\
 y_i y_j^{-1} y_i&= y_j^{-1} y_i y_j^{-1} ~{\rm for~}i\ne j,\tag{ii}\\
y_k y_j^{-1}y_iy_j^{-1}y_k&=y_j^{-1}y_iy_j^{-1} {\rm ~for~} i,j,k {\rm~distinct}.\tag{iii}\end{align}\end{prop}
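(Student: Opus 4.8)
The plan is to identify the presentation $\langle y_1,y_2,y_3\mid(\mathrm i),(\mathrm{ii}),(\mathrm{iii})\rangle$ with the presentation $\langle X,R,S\mid X^2=R^3=S^3=(SR)^3,\ XR=R^{-1}X\rangle$ of $\pi_1(C(K_4))$ furnished by the Corollary above, by means of an explicit change of generators. I would work throughout at the planar basepoint $x_0$ — the unit equilateral triangle with a fourth vertex at its barycenter — which lies in $P(K)$ and whose stabilizer in $SO(3)$ is $S_3$, sitting at the $S_3$-end of the double mapping cylinder. Tracking the four vertices along a loop gives a monodromy homomorphism $\pi_1(C(K_4))\to S_4$; under it $y_i\mapsto(0\,i)$, where $0$ denotes the central vertex of $x_0$, while $X$ maps to a transposition of two triangle vertices and $R,S$ map to two $3$-cycles (face rotations of the regular tetrahedron at the other end of the mapping cylinder, conjugated into $x_0$ by a fixed path $\rho$). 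Reading this off, together with the geometric prescription "the center passes up and over while the extremal vertex passes down and under" (which fixes every over/under choice, hence every spin-cover sign), yields a dictionary: identifications such as $y_2 = XS$, $y_1 = R^{-1}XSR$, $y_3 = RXSR^{-1}$ up to the central involution $z=X^2$ and the choice of $\rho$ — chosen so that each $y_iy_j^{-1}$ realizes a $3$-cycle through $0$, matched with the conjugates of $R$, $S$ and $SR$ — and, inversely, expressions of $X$, $R$, $S$ as words in the $y_i$.

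With the dictionary in place I would run a two-sided surjection argument. Define $f\colon\Gamma:=\langle y_1,y_2,y_3\mid(\mathrm i),(\mathrm{ii}),(\mathrm{iii})\rangle\to\pi_1(C(K_4))$ by sending $y_i$ to the chosen word in $X,R,S$, and verify that the images of relations (i), (ii) and (iii) follow from $X^2=R^3=S^3=(SR)^3$ and $XR=R^{-1}X$: relation (i) becomes the statement that all the "$2\pi$-rotation" elements $(y_jy_i^{-1})^3$ coincide with the central involution $z$; relation (ii) is the braid relation among the star-transpositions $(0\,i)$, equivalent to a conjugated form of $XR=R^{-1}X$; relation (iii) is a commutation relation expressing that $y_k$ and $y_j^{-1}y_iy_j^{-1}$ involve disjoint index sets. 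Conversely define $g\colon\pi_1(C(K_4))\to\Gamma$ by sending $X,R,S$ to the chosen words in the $y_i$, and check that $X^2=R^3=S^3=(SR)^3$ and $XR=R^{-1}X$ hold in $\Gamma$ using (i)--(iii); this simultaneously shows $g$ is surjective, since the $y_i$ lie in its image by the inverse dictionary. Finally verify $f\circ g=\mathrm{id}$ on the generators $X,R,S$; then $g$ is injective, hence (being also surjective) an isomorphism, and $f=g^{-1}$ is the desired isomorphism of presentations.

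The main obstacle is not conceptual but lies in two bookkeeping points. First, assembling a consistent dictionary: tracking the central element $z$ through each identification and fixing $\rho$ so that all the "up/over--down/under" orientations agree, since an inconsistent choice produces spurious factors of $z$ and breaks the relation verifications. Second, and more delicately, confirming in the $f\circ g=\mathrm{id}$ step that no relation has been dropped — that (i)--(iii) really cut the free group on $y_1,y_2,y_3$ down to exactly $\pi_1(C(K_4))$ rather than a proper quotient. Relation (iii) is the critical case, because it is invisible at the level of $S_4$ (both sides represent the same permutation), so its role — and the verification that it together with (i) and (ii) suffices — has to be seen inside $\pi_1(C(K_4))$ itself, where the amalgamated free product structure and the centrality of $z$ are what close the argument.
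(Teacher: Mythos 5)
Your plan --- exhibiting an explicit dictionary between $\{y_1,y_2,y_3\}$ and $\{X,R,S\}$, defining homomorphisms in both directions, verifying the relations on each side, and checking the two composites are the identity --- is exactly the route the paper takes, which records the dictionary ($S=y_3^{-1}y_2$, $R=y_2^{-1}y_3y_1^{-1}y_2$, $X=y_3^{-1}y_1y_3^{-1}$ and its inverse) and explicitly omits the ``fine details'' of the verification you describe. Your candidate words for the $y_i$ differ from the paper's and would need to be pinned down as you anticipate, but the method, including the role of the central element $\tau=(y_iy_j^{-1})^3$ and the geometric reading of relations (i)--(iii), is essentially the same.
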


The isomorphism can be seen by observing one presentation in terms of the geometry of the other, the fine details of which we omit.  The map is given by the identities
\[\begin{array}{rclcrcl}
S&=&y_3^{-1}y_2&&y_3&=&X^{-1}SR^{-1}S^{-1}\\
R&=&y_2^{-1}y_3y_1^{-1}y_2&~~~~\rm{and}~~~~&y_2&=&RX^{-1}SR^{-1}S^{-1}R^{-1}\\
X&=&y_3^{-1}y_1y_3^{-1}&&y_1&=&R^{-1}X^{-1}SR^{-1}S^{-1}R\end{array}\]

As in the case of the braid group, there is a ``pure'' subgroup of $C(K_4)$ which returns vertices to their original position, which is precisely $\pi_1(E(K_4))=F_3\times \mathbb Z_2$.  This is the kernel of the map $\pi_1(C(K_4))\rightarrow S_4$.  In terms of the generators $\{y_i\}$, this kernel is generated by each of the three $y_i^2$, for the left factor, and $\tau=(y_iy_j^{-1})^3$ (any two distinct $i,j$), for the right factor.  Geometrically, this can be seen by viewing $y_iy_j^{-1}$ as a rotation of the tetrahedron by $2\pi/3$ so that it cubes to a rotation of $2\pi$, which explains the first set of relations (i).  The second set of relations (ii) can be rewritten, by multiplying both sides by the left side, to state that $y_i y_j^{-1} y_i$ squares to $\tau$.  Geometrically this is so, because $y_i y_j^{-1} y_i$ is effectively a rotation of $\pi$ about the edge $e_k$ which would get reversed by $y_k$ (see figure \ref{gens323}).  The third set of relations (iii) can then be rewritten to state that conjugation of $y_k$ by this particular square root of $\tau$ inverts $y_k$.  This is easily seen from the fact that the circle along which the end points of $e_k$ travel under the action of $y_k$ gets reversed in orientation by $y_i y_j^{-1} y_i$.  It should be remarked that $\tau$ is thus central, as it commutes with $y_k$, for any $k$.  Also, we note from (i) that $(y_iy_j^{-1})^3=(y_jy_i^{-1})^3$ so that $(y_iy_j^{-1})^6=\tau^2=1$.\\

It is worth noting that the families (i),(ii) and (iii) of relations above are independent in the sense that no two families generate the third.  Without relation (iii) the quotient by the subgroup generated by $\{y_i^2\}$, $i=1,2,3$, and $(y_iy_j)^3$ has the Cayley graph of figure $\ref{CayleyComb}$.  In particular it is not finite and so is not $\Sigma_4$, thus (iii) is independent.  Restricting to a subgroup generated by two generators $y_i,y_j$ renders (iii) inconsequential, and gives $(y_iy_j^{-1})^3=(y_jy_i^{-1})^3$ as the only consequence of (i), so that it's easy to see (by a change of basis $h=y_i, g=y_iy_j^{-1}$, say) that (ii) is independent.  In fact, by abelianizing this subgroup (i.e., by counting the exponents in a relator) we have relations $(6,-6)=0$ from (i) and $(1,1)=0$ from (ii), in $\mathbb Z^2$, thus (i) is also independent.\\

\begin{figure}[h]
\centering
\includegraphics[scale=.6]{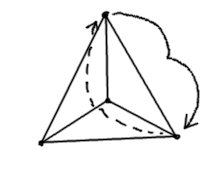}
\caption{The motion of $y_3y_2^{-1}y_3$ is effectively a rotation about the edge connecting the center vertex to the vertex in position 1.}
\label{gens323}
\end{figure}

\begin{figure}[h]
\centering
\includegraphics[scale=.6]{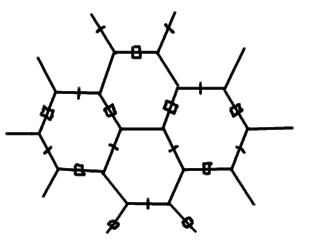}
\caption{The Cayley graph for the quotient which would otherwise result in $\Sigma_4$ after disposing of relation set iii.}
\label{CayleyComb}
\end{figure}



\begin{figure}[h]
\centering
\includegraphics[scale=.7]{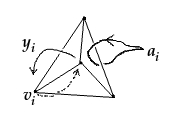}
\caption{The motion of a generator $y_i$, acting trivially on a loop $a_i$ in the complement of a configuration.}
\label{generatory_i}
\end{figure}

\begin{figure}[h]
\includegraphics[scale=.5]{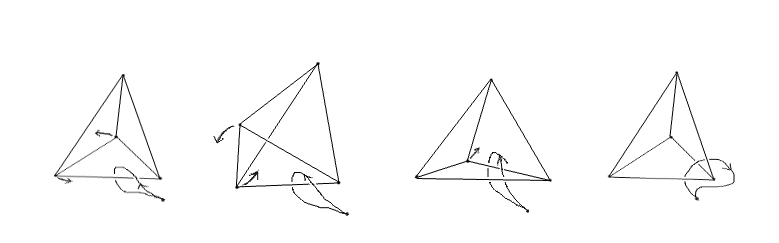}
\caption{The generator $y_1$ sends $a_2$ to $a_1a_2^{-1}$ (where concatenation of loops in $F_3$ is read from right to left).}
\label{gensaction}
\end{figure}

The complement of a linearly embedded tetrahedral graph in $\mathbb R^3$ has fundamental group $F_3$.  Unlike in the case of the braid group acting on the fundamental group of the complement of a configuration of points in the plane, here a rotation of the tetrahedron by $2\pi$ effects the trivial action on $F_3$.  That is, this loop, $\tau$, is in the kernel of the induced map $\psi:\pi_1(C(K_4))\rightarrow Aut(F_3)$.  By labeling the generators of $F_3$ in correspondence with the $y_i$'s of $\pi_1(C(K_4))$, (see figure \ref{generatory_i}), we have that 
\[\psi(y_i^2)(a)=a_iaa_i^{-1},\] for $a\in F_3$ and $a_i$ the generator of $F_3$ corresponding to $y_i$.  Thus $\psi\vert_{\pi_1(E(K_4))}$ is quotienting by the $\mathbb Z_2$ factor followed by the natural identification $F_3\cong Inn(F_3)$.  The action of $\pi_1(C(K_4))$ on $F_3$ is given by the identities 
\[y_i\cdot a_j=a_ia_j^{-1}\] if $i\ne j$ and otherwise 
\[y_i\cdot a_i=a_i,\] as seen in figures \ref{generatory_i}, \ref{gensaction}.  The generators of $\pi_1(C(K_4))$ are thus sent to square roots of conjugation in $Aut(F_3)$.

\bibliography{LinearConfigurationsE2}

\begin{thebibliography}{10}

\bibitem{sbraids4}
Paolo Bellingeri.
\newblock On automorphisms of surface braid groups.
\newblock {\em J. Knot Theory Ramifications}, 17(1):1--11, 2008.

\bibitem{sbraids3}
Joan~S. Birman.
\newblock {\em B{RAID} {GROUPS} {AND} {THEIR} {RELATIONSHIP} {TO} {MAPPING}
  {CLASS} {GROUPS}}.
\newblock ProQuest LLC, Ann Arbor, MI, 1968.
\newblock Thesis (Ph.D.)--New York University.

\bibitem{wickets}
Tara~E. Brendle and Allen Hatcher.
\newblock Configuration spaces of rings and wickets.
\newblock {\em Comment. Math. Helv.}, 88(1):131--162, 2013.

\bibitem{Dahm}
David~Michael Dahm.
\newblock {\em A {GENERALIZATION} {OF} {BRAID} {THEORY}}.
\newblock ProQuest LLC, Ann Arbor, MI, 1962.
\newblock Thesis (Ph.D.)--Princeton University.

\bibitem{ongraphs1}
Robert Ghrist.
\newblock Configuration spaces and braid groups on graphs in robotics.
\newblock In {\em Knots, braids, and mapping class groups---papers dedicated to
  {J}oan {S}. {B}irman ({N}ew {Y}ork, 1998)}, volume~24 of {\em AMS/IP Stud.
  Adv. Math.}, pages 29--40. Amer. Math. Soc., Providence, RI, 2001.

\bibitem{Huh}
Youngsik Huh and Choon~Bae Jeon.
\newblock Knots and links in linear embeddings of {$K_6$}.
\newblock {\em J. Korean Math. Soc.}, 44(3):661--671, 2007.

\bibitem{invariant}
Riccardo Longoni and Paolo Salvatore.
\newblock Configuration spaces are not homotopy invariant.
\newblock {\em Topology}, 44(2):375--380, 2005.

\bibitem{Low}
Per-Olov L{\"o}wdin.
\newblock On the non-orthogonality problem connected with the use of atomic
  wave functions in the theory of molecules and crystals.
\newblock {\em J. Chem. Phys.}, 18:365--375, 1950.

\bibitem{sbraids1}
Luis Paris and Dale Rolfsen.
\newblock Geometric subgroups of mapping class groups.
\newblock {\em J. Reine Angew. Math.}, 521:47--83, 2000.

\bibitem{ongraphs2}
Lucas~Adam Sabalka.
\newblock {\em Braid groups on graphs}.
\newblock ProQuest LLC, Ann Arbor, MI, 2006.
\newblock Thesis (Ph.D.)--University of Illinois at Urbana-Champaign.

\bibitem{Ziegler}
G{\"u}nter~M. Ziegler.
\newblock {\em Lectures on polytopes}, volume 152 of {\em Graduate Texts in
  Mathematics}.
\newblock Springer-Verlag, New York, 1995.

\end{thebibliography}


\begin{thebibliography}{9}


\bibitem[BH02]{BH02} T. Brendle and A. Hatcher, (2002) \emph{Configuration spaces of rings and wickets}, Commentarii Math. Helv. \textbf{88} (2013), 131-162.

\bibitem[D62]{D62} D. M. Dahm, (1962) \emph{A generalization of braid theory}, Ph.D. dissertation, Princeton
University, Princeton, N.J.

\bibitem[H02]{H02} A. Hatcher, (2002) \emph{Algebraic topology}, Cambridge University Press

\bibitem[HJ07]{HJ07} Y. Huh and C. B. Jeon, (2007) \emph{Knots and Links in Linear Embeddings of $K_6$}, J. Korean Math. Soc. \textbf{44} , No. 3, 661-671

\bibitem[L50]{L50} P.-O. L\"owdin (1950) J. Chem. Phys. \textbf{18}, 365

\bibitem[P07]{P07} L. Paris, \emph{Braid groups and Artin groups}, preprint,
arXiv:0711.2372 [mathGR]

\bibitem[Z95]{Z95} G. M. Ziegler, (1995), \emph{Lectures on Polytopes}, Graduate Texts in Mathematics \textbf{152}, Berlin, New York: Springer-Verlag.

\end{thebibliography}
\bibliographystyle{plain}

\end{document}